\newcommand{\youngdiagram}[1]{%
	\begin{tikzpicture}[scale=0.35]
		\foreach \y [count=\i from 0] in {#1} {
			\foreach \x in {1,...,\y} {
				\draw (\x-1,-\i) rectangle (\x,-\i-1);
			}
		}
	\end{tikzpicture}%
}
\newtheorem{thm}{Theorem}
\newtheorem{conj}{Conjecture}
\newtheorem{prop}{Proposition}
\newtheorem{obs}[thm]{Observation}
\newtheorem{cor}[thm]{Corollary}
\newtheorem{lemma}[thm]{Lemma}
\newtheorem*{definition*}{Definition}
\newtheorem{question}[conj]{Question}
\crefname{thm}{Theorem}{Theorems} 
\crefname{prop}{Proposition}{Propositions} 
\crefname{conj}{Conjecture}{Conjectures} 
\crefname{obs}{Observation}{Observations} 
\crefname{cor}{Corollary}{Corollarys} 
\crefname{question}{Question}{Questions} 
\crefname{appendix}{Appendix}{Appendices}
\newcommand{\br}[1]{\llbracket #1 \rrbracket}
\newcommand{\rs}[1]{\langle #1 \rangle}
\newcommand{\mex}{\operatorname{mex}}
\newcommand{\SG}{{\mathcal G}}
\newcommand{\ZZ}{{\mathbb Z}}
\newcommand{\nim}{\textsc{Nim}}
\newcommand{\rnim}{\textsc{RNim}}
\newcommand{\pnim}{\textsc{PNim}}
\newcommand{\sgrnim}{\SG_\textsc{RNim}}
\newcommand{\sgpnim}{\SG_\textsc{PNim}}
\renewcommand{\P}{\mathcal{P}}
\newcommand{\N}{\mathcal{N}}
\colorlet{darkGreen}{green!50!black}
\newcommand{\class}{
		
		
		
	\begin{tikzpicture}[scale=0.5]
		\def\unknown{unknown}
		\def\textsizes{0.6}
		\draw [darkGreen] (2,4.5)--node[fill=none,yshift=1mm,xshift=12mm,scale=\textsizes]{miserable}++(22,0)--++(0,7.5)--++(-22,0)--cycle;
		\draw [red] (3,7.5)--node[fill=none,yshift=1mm,xshift=22mm,scale=\textsizes]{pet}++(16,0)--++(0,3.5)--++(-16,0)--cycle;
		\draw [gray] (3.5,1)--node[fill=none,yshift=1mm,scale=\textsizes]{forced}++(8,0)--++(0,9.5)--++(-8,0)--cycle;
		\draw [cyan] (2.5,0.5)--node[fill=none,yshift=1mm,xshift=-17.5mm,,scale=\textsizes]{returnable}++(17,0)--++(0,11)--++(-17,0)--cycle;
		\draw [blue] (1.5,1.5)--node[fill=none,yshift=1mm,xshift=12mm,scale=\textsizes]{tame}++(23,0)--++(0,11)--++(-23,0)--cycle;

		\node at (15.27,9.5) {$1$-\pnim{}, $1$-\rnim{},};
		\node at (15.27,8.5) {\textsc{Subtraction}};
		\node at (7.5,9) {$1$-\nim{}};
		
		\node at (7.5,6) {\nim{}};
		\node at (15.3,6.5) { \pnim{},\rnim{},};
		\node at (15.3,5.5) {$\textsc{Wythoff}$};
		
		\node at (15.45,3.5){\textsc{Downright},};
		\node at (15.45,2.5){\textsc{LCTR}};
		
		\node at (22,1) {\textsc{Rook}};
	\end{tikzpicture}
}
\newcommand{\str}{
	\begin{tikzpicture}[scale=0.08]%
		\draw (0,0) -- (4,0)--(4,-1)--(3,-1)--(3,-2)--(2,-2)--(2,-3)--(0,-3)--cycle
		(0,-1)--(3,-1)--(3,0)
		(0,-2)--(2,-2)--(2,0)
		(1,0)--(1,-3);%
		\draw[white] (3.4,0)--(3.4,-1);
	\end{tikzpicture}%
}
\newcommand{\ourtitle}{Nim on Integer Partitions and Hyperrectangles}
\colorlet{myGreen}{green!70!black}
\definecolor{myBlue}{rgb}{0.25, 0.0, 1.0}
\definecolor{lgray}{rgb}{0.75, 0.75, 0.75}
\begin{document}
\begin{frontmatter}

\title{\ourtitle}
\author[inst1]{Eric Gottlieb}
\ead{gottlieb@rhodes.edu}
\affiliation[inst1]{organization={Rhodes College}, city={Memphis},
            state={Tennessee},
            country={ U.S.A.}}

\author[inst2]{Matjaž Krnc}
\ead{matjaz.krnc@upr.si}
\author[inst2]{Peter Muršič}
\ead{peter.mursic@famnit.upr.si}

\affiliation[inst2]{organization={University of Primorska}, city={Koper},
            country={Slovenia}
            }

\begin{abstract}
 We describe \pnim{} and \rnim{}, two variants of \nim{} in which piles of tokens are replaced with integer partitions or hyperrectangles. 
In \pnim{}, the players choose one of the integer partitions and remove a positive number of rows or a positive number of columns from the Young diagram of that partition. 
In \rnim{}, players choose one of the hyperrectangles and reduce one of its side lengths. 

For \pnim{}, we find a tight upper bound for the Sprague-Grundy values of partitions and characterize partitions with Sprague-Grundy value one. For \rnim{}, we provide a formula for the Sprague-Grundy value of any position. 
We classify both games in the Conway-Gurvich-Ho hierarchy. 
\end{abstract}

\begin{keyword}
impartial game \sep Young diagram \sep integer partition \sep hyperrectangle \sep combinatorial game \sep
Sprague-Grundy

\MSC[2020] 
91A46 
\sep 05A17 

\end{keyword}

\end{frontmatter}

\section{Introduction}
Impartial combinatorial games have been formally studied since the analysis of \nim{} by Bouton \cite{bouton1901} in 1901. 
%
His analysis, together with those of Sprague \cite{Spr35} and Grundy \cite{Gru39}, has led to a rich theory that helps us to determine which positions in a game are losing. Sprague-Grundy values generalize winning and losing positions and are a powerful tool for analyzing disjunctive sums of games. 

Many combinatorial objects lend themselves naturally to game-theoretic interpretations. 
Integer partitions, studied for their number-theoretic and combinatorial properties, are appealing in this setting. 
The Young diagram of a partition offers a geometric, visual medium for encoding discrete structures, and suggests natural move rules: removing parts of a partition corresponds to subtracting, truncating, or reshaping these diagrams in ways that mirror the token-removal actions in classical games like \textsc{Nim}.

\subsection{Related work}

Several researchers have investigated combinatorial games on partitions. 
In 1970, Sato \cite{sato1970maya} showed that Welter's game can be formulated as a game on partitions and conjectured that the Sprague-Grundy values of this game are related to the representation theory of the symmetric group. 
Furthermore, several well-studied games not explicitly played on partitions, including \nim{},  \textsc{Wythoff}, and \textsc{Welter}, can be formulated as games on partitions. 

In 2018, Irie \cite{irie2018p} resolved Sato's conjecture in the affirmative. 
Other researchers, including Abuku and Tada \cite{abuku2023multiple} and Motegi \cite{motegi2021gamepositionsmultiplehook}, further extended these results. 
In addition, a number of other 
games on integer partitions have been studied.
Several authors \cite{Gottlieb2024LCTR,Ilic2019} studied the game \textsc{LCTR}. 
Ba\v si\' c \cite{basic2022some,Basic_2023} studied the game \textsc{CRIM}. 
A suite of games motivated by the moves of chess pieces, collectively referred to as \textsc{Impartial Chess}, was studied by Berlekamp \cite{impchess} and others \cite{gottlieb2025impartialchessintegerpartitions}. 
All of these games are impartial. In her honors thesis, Meit \cite{hannah2025thesis} studied \textsc{CRPM} and \textsc{CRPS}, two partizan combinatorial games on partitions.


\subsection{Our results}

We introduce \pnim{}, an impartial combinatorial game in which a position consists of several Young diagrams of integer partitions. 
Players alternate choosing a nonempty partition and removing either a positive number of rows or a positive number of columns from its Young diagram. If the rows (or columns) removed lie between rows (or columns) that are not removed, then the remaining rows (or columns) are merged. 
For example, if the last two rows of the first partition in $\br{4, 2, 1}+\br{3, 3}$ are removed, the resulting partition is $\br{4}+ \br{3,3}$. 
\nim{} is the special case of \pnim{} where each partition consists of a single part.

We study Sprague-Grundy values for \pnim{} when played on a single partition.
We determine those values for various families, including the family of rectangular partitions (i.e. rectangles).
\begin{restatable}{thm}{thmrect}
\label{thm:rect}
    If $r$ and $c$ are positive integers then \[\sgpnim(\br{c^r}) = ((r-1) \oplus (c-1)) + 1.\]
\end{restatable}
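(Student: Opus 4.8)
The plan is to induct on $r+c$ and read off $\sgpnim(\br{c^r})$ directly from its option set via $\mex$. Write $a=r-1$ and $b=c-1$. From a rectangle $\br{c^r}$ the legal moves remove a positive number of rows, giving $\br{c^{r'}}$ for $0\le r'\le r-1$, or a positive number of columns, giving $\br{(c')^r}$ for $0\le c'\le c-1$; removing all rows or all columns yields the empty partition, which is terminal and so has value $0$. Every proper sub-rectangle is strictly smaller in the measure $r+c$, so the induction hypothesis gives $\sgpnim(\br{c^{r'}})=((r'-1)\oplus b)+1$ and $\sgpnim(\br{(c')^r})=(a\oplus(c'-1))+1$. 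Hence the set of option values is
\[
S=\{0\}\cup\{(i\oplus b)+1 : 0\le i<a\}\cup\{(a\oplus j)+1 : 0\le j<b\},
\]
and it remains to prove $\mex S=(a\oplus b)+1$.

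For the ``not attained'' direction, note $(a\oplus b)+1\ge 1\ne 0$, and if $(a\oplus b)+1=(i\oplus b)+1$ then XOR-ing by $b$ forces $i=a$, impossible for $i<a$; the column case is symmetric, so $(a\oplus b)+1\notin S$. For the ``everything below is attained'' direction I reduce to the key fact that for all $a,b\ge 0$ and every $w$ with $0\le w<a\oplus b$, either $w\oplus b<a$ or $w\oplus a<b$. Granting this, take any $v$ with $1\le v\le a\oplus b$ and set $w=v-1<a\oplus b$; if $w\oplus b<a$ then $i=w\oplus b$ lies in $[0,a)$ and $(i\oplus b)+1=w+1=v$, while if $w\oplus a<b$ then symmetrically $v=(a\oplus j)+1$ with $j=w\oplus a\in[0,b)$. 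Together with $0\in S$ this gives $\{0,1,\dots,a\oplus b\}\subseteq S$, so $\mex S=(a\oplus b)+1$, closing the induction.

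The crux is this XOR fact, which I expect to be the main obstacle, and I would prove it by induction on $a+b$. Let $m$ be the position of the most significant set bit of $a\oplus b$; exactly one of $a,b$ carries bit $m$, and by the symmetry of the statement in $a$ and $b$ I may assume $a$ does and $b$ does not, so that $a$ and $b$ agree above position $m$. Since $w<a\oplus b<2^{m+1}$, the number $w$ has no bits above position $m$. If bit $m$ of $w$ is $0$, then $w\oplus b$ agrees with $a$ above $m$ and carries a $0$ where $a$ carries a $1$ at position $m$, so $w\oplus b<a$ with no recourse to induction. If bit $m$ of $w$ is $1$, then $w$ and $a\oplus b$ agree at and above $m$, whence their low parts satisfy $w\bmod 2^m<(a\oplus b)\bmod 2^m=(a\bmod 2^m)\oplus(b\bmod 2^m)$; applying the induction hypothesis to $a\bmod 2^m$ and $b\bmod 2^m$ (a strictly smaller instance, since $a\ge 2^m>a\bmod 2^m$) yields one of the two low-part inequalities. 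That inequality lifts to $w\oplus b<a$ or to $w\oplus a<b$, because in this case $w\oplus b$ already agrees with $a$, and $w\oplus a$ already agrees with $b$, on every bit from $m$ upward. This completes the argument.
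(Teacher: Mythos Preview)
Your proof is correct and follows essentially the same inductive route as the paper: both identify the option set of $\br{c^r}$ as the smaller rectangles together with the empty partition, apply the induction hypothesis, and reduce to the mex identity $\mex S=(a\oplus b)+1$. The only difference is packaging---the paper extracts a shifted-mex lemma and then invokes the standard Nim-sum identity $a\oplus b=\mex\bigl(\{i\oplus b:0\le i<a\}\cup\{a\oplus j:0\le j<b\}\bigr)$ as a black box, whereas you re-derive the needed consequence of that identity from scratch via your most-significant-bit induction.
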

We establish a tight upper bound for Sprague-Grundy values for \pnim{}.
\begin{restatable}{prop}{thmbound}
\label{prop:bound}
    If $\lambda = \br{\lambda_1, \ldots, \lambda_r}$ is nonempty then $\sgpnim(\lambda) \leq \lambda_1 + r - 1$. 
\end{restatable}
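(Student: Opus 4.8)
The plan is to induct on the number of cells $|\lambda|$, using the recursive mex-definition of the Sprague--Grundy value. Write $N = \lambda_1 + r - 1$ for the target bound. The central reduction is this: if every option $\mu$ of $\lambda$ satisfies $\sgpnim(\mu) \le N - 1$, then the value $N$ does not occur among the options, so $\sgpnim(\lambda) = \mex\{\sgpnim(\mu)\} \le N$. Thus it suffices to bound the Sprague--Grundy value of each option by $N-1$.

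To do this I would apply the inductive hypothesis to each option. Every legal move deletes at least one row or at least one column, hence at least one cell, so $|\mu| < |\lambda|$ and the hypothesis applies. Writing $\mu = \br{\mu_1, \ldots, \mu_s}$ with $s$ its number of rows, induction gives $\sgpnim(\mu) \le \mu_1 + s - 1$. It therefore remains to verify the purely combinatorial inequality $\mu_1 + s \le \lambda_1 + r - 1$ for every option, since this immediately yields $\sgpnim(\mu) \le N - 1$.

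This inequality follows from a short case analysis on the move type. If $\mu$ arises by deleting a positive number of rows, then $s \le r - 1$, and since the surviving parts form a sub-multiset of the parts of $\lambda$ we have $\mu_1 \le \lambda_1$; adding yields $\mu_1 + s \le \lambda_1 + r - 1$. If $\mu$ arises by deleting a positive number of columns, then the width strictly decreases, so $\mu_1 \le \lambda_1 - 1$, while deleting and merging columns cannot create new rows, so $s \le r$; adding again yields $\mu_1 + s \le \lambda_1 + r - 1$. The base case $\lambda = \br{1}$ is immediate: its only option is the empty partition, with value $0$, so $\sgpnim(\br{1}) = 1 = N$.

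I expect the only delicate point to be the column-deletion case, where one must confirm that reducing the number of columns never increases the number of rows and always strictly decreases $\lambda_1$. Both facts are clarified by passing to the conjugate: deleting columns of $\lambda$ corresponds to retaining a sub-multiset of the column heights $\lambda'_1, \ldots, \lambda'_{\lambda_1}$, and since each such height is bounded by $\lambda'_1 = r$, the new number of rows is at most $r$. Everything else is bookkeeping, and the tightness of the bound is witnessed by the rectangles of \cref{thm:rect}, for which $N$ is attained exactly when $r-1$ and $c-1$ have disjoint binary expansions.
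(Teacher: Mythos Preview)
Your argument is correct, with one small omission: when you apply the inductive hypothesis to an option $\mu$, you implicitly assume $\mu$ is nonempty. A move may remove all rows or all columns, giving $\mu = \br{\,}$, to which the stated hypothesis does not apply; but then $\sgpnim(\mu) = 0 \le N - 1$ since $N \ge 1$, so the conclusion survives.

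The paper packages the same idea differently. Rather than inducting directly on $|\lambda|$ toward the Sprague--Grundy bound, it first proves the sharper combinatorial statement that the longest play from $\lambda$ has length \emph{exactly} $\lambda_1 + r - 1$: defining $f(\mu)$ as the total number of rows and columns of $\mu$, one checks that $f$ drops by at least one on every move and by at least two on the terminal move, and then exhibits a play of length $f(\lambda)-1$. The proposition then follows from the general fact that the Sprague--Grundy value is bounded by the longest play. Your key inequality $\mu_1 + s \le \lambda_1 + r - 1$ is exactly the statement $f(\mu) \le f(\lambda) - 1$, so the combinatorial content is identical; the paper's route just isolates it as a game-length statement and obtains the exact longest-play value as a byproduct, whereas yours goes straight to the mex bound.
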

We identify several infinite families of partitions which attain the upper bound from \cref{prop:bound}. At the other extreme, we characterize the partitions with Sprague-Grundy value $1$.
\begin{restatable}{thm}{thmpartitioninterval}
\label{thm:partition_interval}
For any partition $\lambda$ we have $\sgpnim(\lambda) = 1$ if and only if $\lambda=\br{1}$ or $\br{r,r,r-1,r-2,\ldots,2}\leq \lambda \leq \br{r^r}$ for some $r\geq 2$.
\end{restatable}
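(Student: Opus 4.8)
The plan is to prove the biconditional by strong induction on the number of cells $|\lambda|$, after a reduction that removes the value $0$ from play. The opening observation is that from any nonempty $\lambda$ the move ``delete all rows'' reaches the empty partition, whose Sprague--Grundy value is $0$; hence $0$ always occurs among the option values, so $\sgpnim(\lambda)\ge 1$ for every nonempty $\lambda$. Consequently $\sgpnim(\lambda)=1$ if and only if $\lambda$ is nonempty and \emph{no} option of $\lambda$ has value $1$. Writing $\mathcal A$ for the family on the right-hand side of the statement, the inductive hypothesis lets me replace ``option of value $1$'' by ``option lying in $\mathcal A$'', so the theorem reduces to the purely combinatorial claim: for nonempty $\lambda$, one has $\lambda\in\mathcal A$ if and only if no single deletion of rows or of columns sends $\lambda$ into $\mathcal A$.

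Two structural facts streamline the argument. First, $\mathcal A$ is closed under conjugation: $\br{1}$ and each square $\br{s^s}$ are self-conjugate, the staircase $\br{s,s,s-1,\ldots,2}$ is self-conjugate, and conjugation preserves containment, so each interval $[\br{s,s,s-1,\ldots,2},\br{s^s}]$ maps to itself. Since the column-deletion options of $\lambda$ are exactly the conjugates of the row-deletion options of $\lambda'$, I may treat column moves as row moves on the conjugate throughout. Second, I record that a member of $\mathcal A$ with parameter $s\ge 2$ is precisely a partition with exactly $s$ rows, with $\mu_1=\mu_2=s$, and with $\mu_i\ge s-i+2$ for $i\ge 2$.

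For the forward direction ($\lambda\in\mathcal A\Rightarrow$ no option in $\mathcal A$), suppose $\lambda\in[\br{r,r,r-1,\ldots,2},\br{r^r}]$ and a row deletion keeps rows $i_1<\cdots<i_s$ to form $\mu$, with $s\le r-1$. Every part of $\lambda$ is at least $2$, so $\mu\neq\br{1}$; and if $\mu$ were in $\mathcal A$ with parameter $s$ then $\mu_1=\lambda_{i_1}=s$, which is impossible, since from $i_1\le r-s+1$ and the staircase bound $\lambda_{i_1}\ge r-i_1+2$ one gets $\lambda_{i_1}\ge s+1$ (and $\lambda_{i_1}=r>s$ if $i_1=1$). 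Conjugation then disposes of column deletions. This direction is short.

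The main obstacle is the reverse direction: every nonempty $\lambda\notin\mathcal A$ must have an option in $\mathcal A$. If $\lambda$ has a part equal to $1$, or dually a unique largest part, then keeping a single row of length $1$, respectively a single height-$1$ column, reaches $\br{1}$. Otherwise $\lambda_1=\lambda_2=m$ and all parts are at least $2$, and after conjugating if necessary I may assume $\lambda$ has at least $m$ rows. The crux is then the following key lemma, proved by strong induction on the number of rows: if $\tau$ has all parts at least $2$ and at least $\tau_1$ rows, then some subset of its rows forms a member of $\mathcal A$, and that subset is all of $\tau$ only when $\tau\in\mathcal A$. In the induction, if $\tau_2<\tau_1$ I delete the unique longest row; if $\tau_2=\tau_1$ and the staircase $\tau_i\ge \tau_1-i+2$ holds I keep the top $\tau_1$ rows (which then lie in $\mathcal A$); and if the staircase first fails at row $i+1$, the tail $(\tau_{i+1},\tau_{i+2},\ldots)$ again satisfies the hypothesis with strictly fewer rows and a smaller largest part, so induction applies. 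Feeding $\lambda$ (or $\lambda'$) into this lemma yields a proper deletion into $\mathcal A$. I expect the staircase-failure bookkeeping in the key lemma---checking that after dropping rows $1,\ldots,i$ the tail still has at least $\tau_{i+1}$ rows---to be the most delicate point, and the place where the exact shape of the lower endpoint $\br{r,r,r-1,\ldots,2}$ is essential.
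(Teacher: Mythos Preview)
Your proof is correct and shares the paper's overall reduction: since every nonempty position has a move to the terminal, the value-$1$ positions are exactly the nonempty $\lambda$ with no option in $\mathcal A$, and the argument splits into (i) no move from $\mathcal A$ to $\mathcal A$, and (ii) every nonempty $\lambda\notin\mathcal A$ has a move into $\mathcal A$.

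The implementations differ. For (i) the paper notes that every member of $\mathcal A$ has Dyson rank $0$ and that deleting $k<r$ rows from such a $\lambda$ leaves $\bar\lambda\ge\br{r-k+1,r-k,\ldots,2}$, hence $r-k$ rows and at least $r-k+1$ columns, so rank $\neq 0$; this is a bit slicker than your direct bound on $\mu_1$, though yours is equally valid. For (ii) the paper avoids your recursive row-selection lemma and instead gives one explicit column move: conjugate so that $\lambda_1\ge r$, find the least $i>1$ with $\lambda_i<\lambda_1-i+2$ (padding $\lambda_i=0$ for $i>r$), and delete the first $\lambda_1-i+1$ columns, landing directly in $\mathcal A$ with parameter $i-1$. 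Both constructions pivot on the same ``first staircase failure'' index, so the ideas coincide; the paper's version is shorter and constructive in one shot, while your key lemma is a pleasant standalone fact (and incidentally shows that once $r\ge\lambda_1$ and all parts are $\ge 2$, a \emph{row} deletion into $\mathcal A$ always exists).
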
 
We show how \cref{thm:partition_interval} can be used to find the optimal response for \pnim{} under mis\`ere play when played on a single partition.
Motivated by \cref{thm:rect} we consider a related game, denoted by \rnim{}, which is played on hyperrectangles, which are represented by $\rs{k_1, \dots, k_d}$, where each $k_j$ is a nonnegative integer. 
A position consists of several hyperrectangles, possibly of varying dimension.
A move consists of choosing a hyperrectangle of positive hypervolume and reducing the length of one of its sides. 
For example, 
both $\rs{5,1,2}+\rs{2,3}$ and $\rs{5,4,2}+\rs{0,3}$ 
can be reached from  $\rs{5,4,2}+\rs{2,3}$ 
in a single move.
In this setting \cref{thm:rect} generalizes to any dimension $d$.
\begin{restatable}{thm}{thmformula}
\label{thm:formula}
    If $k_1, \ldots, k_d$ are positive integers, then \[ \sgrnim(\rs{k_1, \ldots, k_d}) = ((k_1-1) \oplus \cdots \oplus (k_d-1)) + 1.\]
\end{restatable}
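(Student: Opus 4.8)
The plan is to prove the formula by strong induction on $k_1 + \cdots + k_d$, showing that the $\mex$ of the Sprague--Grundy values of the options of $\rs{k_1,\ldots,k_d}$ equals $X+1$, where I abbreviate $X = (k_1-1)\oplus\cdots\oplus(k_d-1)$. It is convenient to pass to the shifted coordinates $a_j = k_j - 1 \ge 0$, so that $X = a_1 \oplus \cdots \oplus a_d$. A position with some zero coordinate has hypervolume zero, admits no move, and hence has value $0$; this supplies both the base case and the treatment of terminal options. For a position with every $k_j \ge 1$, the options are obtained by replacing a single $k_j$ with some $k_j'$ satisfying $0 \le k_j' < k_j$. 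Reducing $k_j$ to $0$ yields a terminal position of value $0$, while reducing it to a positive value $k_j' = a_j' + 1$ with $0 \le a_j' < a_j$ yields an all-positive position of strictly smaller coordinate sum, whose value is $(X \oplus a_j \oplus a_j') + 1$ by the inductive hypothesis.

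To identify the $\mex$ of these option values with $X+1$, I verify the two standard conditions. First, no option has value $X+1$: the terminal options have value $0 \ne X+1$, and an interior option equals $X+1$ only if $a_j \oplus a_j' = 0$, i.e. $a_j' = a_j$, which is excluded by $a_j' < a_j$. Second, every value in $\{0,1,\ldots,X\}$ is realized by some option. The value $0$ is realized by any terminal move. For $1 \le g \le X$, put $t = g-1 < X$; I must produce $j$ and $a_j'$ with $0 \le a_j' < a_j$ and $(X \oplus a_j \oplus a_j') + 1 = g$, equivalently $a_j' = a_j \oplus X \oplus t$. This is exactly the classical \nim{} move that changes the Nim-sum of the $a_j$ from $X$ to $t$: letting $b$ be the most significant bit in which $X$ and $t$ differ, this bit is set in $X$ (since $t < X$), hence set in some $a_j$; the choice $a_j' = a_j \oplus X \oplus t$ then clears bit $b$ while leaving the higher bits of $a_j$ intact, forcing $a_j' < a_j$, and the resulting Nim-sum is $X \oplus a_j \oplus a_j' = t$. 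The corresponding option therefore has value $t+1 = g$.

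Combining the two conditions gives $\mex = X+1$, completing the induction. The steps requiring care are bookkeeping rather than conceptual: one must check that an interior move keeps all coordinates positive, so that the inductive hypothesis (stated only for positive coordinates) applies to the resulting position, and that the index $j$ selected in the lower-bound step satisfies $a_j \ge 1$, which holds because bit $b$ is set in $a_j$, so that the side can legitimately be shortened. I expect no genuine obstacle: once the problem is phrased in the shifted coordinates $a_j$, the analysis reduces to the textbook Sprague--Grundy treatment of \nim{}, with the ``$+1$'' accounting for the always-available move that sends a coordinate to $0$.
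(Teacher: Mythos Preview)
Your proof is correct and follows essentially the same approach as the paper: induct (the paper phrases it as a minimal counterexample), split the options into the terminal moves of value~$0$ and the interior moves whose values are given by the inductive hypothesis, and then identify the $\mex$ via the Nim-sum identity. The only difference is packaging: the paper isolates the $\mex$ computation into auxiliary results (\cref{lem:shifted_mex}, \cref{obs:mex-xor}, \cref{cor:rnim}) and cites them, whereas you inline the classical Nim highest-bit argument directly.
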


We also study mis\`ere variants of both \pnim{} and \rnim{}. 
We establish that both games, when played on a single partition or hyperrectangle, are pet and returnable. 
Due to \cref{thm:formula,thm:partition_interval,thm:tame}, for \pnim{} played on a single partition, or \rnim{} played on several hyperrectangles, we are able to respond optimally, under both normal as well as mis\`ere play.  
We also show that resolving \pnim{} under normal play is equivalent to resolving it under mis\`ere play.

This paper is structured as follows. In \cref{sec:prelim} we provide the necessary definitions and conventions that we use. 
\cref{sec:pnim} is dedicated to our results on \pnim{} including \cref{thm:rect,prop:bound,thm:partition_interval}, followed by \cref{sec:rnim} which concerns \cref{thm:formula} and other results on \rnim{}. 
We classify \pnim{} and \rnim{} in the sense of Conway-Gurvich-Ho in \cref{sec:cgh}, where we also explain how to play misère.
\cref{sec:conclusion} summarizes connections with other areas of mathematics and offers directions for future work. 
 \cref{sec:small,sec:heavy} contain data in support of the conjectures and questions posed in \cref{sec:conclusion}.


\renewcommand{\sgrnim}{\SG_\textsc{R}}
\renewcommand{\sgpnim}{\SG_\textsc{P}}
\section{Preliminaries} \label{sec:prelim}

We denote the set of integers by $\mathbb Z$ and the set of nonnegative integers by $\mathbb Z^{\geq 0}$. For integers $i$ and $j$ we define $[i,j] = \{ n \in \mathbb Z : i \leq n \leq j\}$ and $[j]=[1,j]$. 

\paragraph{Partitions} Let $n \in \mathbb Z^{\geq 0}$. 
An \emph{(integer) partition} $\lambda$ of $n$ is a sum $\lambda_1 + \cdots + \lambda_r = n$ of positive integers equal to $n$ with $\lambda_1 \geq \cdots \geq \lambda_r$. The $\lambda_i$'s are called \emph{parts}. 
We write $\lambda \vdash n$, and $\lambda = \br{\lambda_1, \ldots, \lambda_r}$, and use the exponent notation to shorten the repeated parts of same size, e.g., $\br{3,3,2,2,2}=\br{5^0,3^2,2^3}=\br{3^2,2^3}$.
The empty partition $\br{ \, }$ is the unique partition of zero. 
Let $\mathbb Y = \{\lambda \vdash n : n \in \mathbb Z^{\geq 0}\}$. 
The \emph{(Dyson's) rank} of $\lambda$ is defined to be $|\lambda_1-r|$; see \cite{dyson1944some}.

A \emph{Young diagram} is a way to represent a partition graphically. Since partitions are in bijection with  Young diagrams, we do not distinguish between a partition and its Young diagram.
The \emph{conjugate} of a partition is obtained by exchanging the rows and columns of its Young diagram; see Andrews \cite{andrews1998theory}. 

Young's lattice is a partial order on $\mathbb Y$ defined by $\br{\lambda_1, \ldots, \lambda_\ell} \leq \br{\mu_1, \ldots, \mu_m}$ if and only if $\ell \leq m$ and $\lambda_j \leq \mu_j$ for each $j = 1, \ldots, \ell$. This partial order is relevant for us in \cref{thm:partition_interval}, for example. 

\paragraph{Impartial games}  
All the combinatorial games in this paper are finite and impartial;  \cite{Sie13,Con76,berlekamp2018winning} are standard references. 
A game consists of rules that dictate the moves the players can make from a given position.
We write $p \to p'$ if there is a move from position $p$ to position $p'$. 
When clear from context, we use terms \emph{position} and \emph{game} interchangeably. 
%
A position is said to be \emph{terminal} if no moves are available from it. Under normal (resp. misère) play, moving to a terminal position wins (resp. loses) the game. 

In an impartial game, every position has the property that exactly one of the two players can win if they play optimally. 
If the next player can force a win, the position is called \emph{winning} or an \emph{$\N$-position}. 
If the previous player can force a win, the position is called \emph{losing} or a \emph{$\P$-position}. 
We define $\N$ and $\P$ to be the sets of winning and losing positions, respectively. 

For a finite set $S \subseteq \ZZ^{\leq 0}$, define $\mex(S)$ to be the smallest nonnegative integer not contained in $S$. 
The \emph{Sprague-Grundy} value of a position $p$ in a game $G$ is $\SG_G(p) = \mex \{\SG_G(p') \mid p \to p'\}$. 
In particular, if $p$ is terminal, we have $\SG_G(p)=0$. 
The \emph{misère Grundy} value $\SG_G^-(p)$ is defined the same way as $\SG_G(p)$, except terminals are assigned value $1$. 
Under normal (resp. misére) play, a position is  losing if and only if its Sprague-Grundy (resp. misére Grundy) value is zero. 
A natural upper bound for both Sprague-Grundy as well as misère Grundy value is the maximal possible number of moves from $p$ to a terminal position of $G$ (i.e. \emph{the longest play} from $p$).
A \emph{$(t,\ell)$-position} is any position  $p$ with $\SG_G(p)=t$ and $\SG_G^-(p)=\ell$, while a \emph{$t$-position} is any position  $p$ with $\SG_G(p)=t$.

The \emph{disjunctive sum of two games} (or game sum) $G_1+G_2$ is a game in which the two games are played in parallel, with each player being allowed to make a move in either one of the games per turn.
The disjunctive sum is commutative and associative.
By Sprague-Grundy theorem, the $\SG_{G_1+G_2}$-value of position $(\lambda_1,\lambda_2)$ in the game sum $G_1+G_2$ equals the Nim-sum of Sprague-Grundy values of the games, that is
$\SG_{G_1}(\lambda_1) \oplus \SG_{G_2}(\lambda_2)$, where the $\oplus$ operator denotes the binary \textsc{xor} operation (also called \emph{nimber addition}).


\section{\pnim{}} \label{sec:pnim}
This section studies Sprague-Grundy values for \pnim{} under normal play convention. We focus on the restriction of \pnim{} to positions consisting of a single partition, which we call \emph{$1$-\pnim{}}. 
We establish bounds for Sprague-Grundy values, characterize partitions attaining value $1$, and provide complete solutions for special partition families including rectangles. 
The properties we uncover for $1$-\pnim{} will later be used towards analyzing both normal and misère play of \pnim{} in general.

In \pnim{}, players alternate selecting one of the partitions and removing a positive number of rows or a positive number of columns. 
If there remains more than one piece of the diagram initially chosen, those pieces are merged together. We now define this formally.

\begin{definition*}
\pnim{} is a disjunctive sum of several $1$-\pnim{} games.
\mbox{$1$-\pnim{}} is a game in which positions are members of $\mathbb Y$. 
The empty partition $\br{\,}$ is the only terminal position in $1$-\pnim{}. 
Let 
$\lambda=\br{\lambda_1,\dots,\lambda_r}$ be such a member and let $\lambda'=\br{\lambda'_1,\dots,\lambda'_{\lambda_1}}$ be its conjugate. 
In $1$-\pnim{}, we have row moves (1) and column moves (2). 

\begin{enumerate}
    \item For a proper subsequence $i_1,\dots, i_\ell$ of $1,\dots,r$, we have $\lambda\to\br{\lambda_{i_1},\dots,\lambda_{i_\ell}}$.

    \item For a proper subsequence $i_1,\dots, i_\ell$ of $1,\dots,\lambda_1$, we have $\lambda\to \br{\lambda'_{i_1},\dots,\lambda'_{i_\ell}}'$. 
\end{enumerate}
We denote the Sprague-Grundy value of a position in \pnim{} by $\sgpnim(\cdot)$. 
\end{definition*}
\noindent An example of a \pnim{} play starting from $\br{4,2,1} + \br{3,3}$ is shown below.
    \begin{equation*}
    \label{eq:pnimeg}
    \fcolorbox{white}{black!20}{$\br{4,2,1}\!+\!\br{3,3}$}
    \to 
    \fcolorbox{white}{black!20}{$\br{4}\!+\!\br{3,3} $}
    \to
     \fcolorbox{white}{black!20}{$\br{4}\!+\!\br{3} $}
    \to 
    \fcolorbox{white}{black!20}{$\br{3}\!+\!\br{3}$}
    \to
    \fcolorbox{white}{black!20}{$\br{3}$}
    \to
    \fcolorbox{white}{black!20}{$\br{ \, }$}
    \end{equation*}

\begin{obs}[conjugate invariance]\label{obs:conjinvariance}
Let $\lambda$ be a partition and $\lambda'$ be its conjugate. Then $\sgpnim(\lambda)=\sgpnim(\lambda')$, as well as $\sgpnim^-(\lambda)=\sgpnim^-(\lambda')$. 
\end{obs}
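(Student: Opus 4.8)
The plan is to realize conjugation as a symmetry of the game $1$-\pnim{} and then invoke the fact that the Sprague-Grundy value of a position depends only on the isomorphism type of the game tree beneath it. Concretely, conjugation $\lambda \mapsto \lambda'$ is an involution on $\mathbb{Y}$ (so $(\lambda')' = \lambda$) that fixes the unique terminal position, since the empty partition is self-conjugate. The key point is that conjugation \emph{interchanges the two move types}: every row move (type $1$) available from $\lambda$ corresponds to a column move (type $2$) available from $\lambda'$, and vice versa. Granting this, the map $\mu \mapsto \mu'$ is a bijection between the options of $\lambda$ and the options of $\lambda'$, and the whole argument runs by induction on $n = |\lambda|$.

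First I would verify the move correspondence explicitly from the definition. Write $\lambda = \br{\lambda_1, \ldots, \lambda_r}$, so that its conjugate $\lambda'$ has largest part $(\lambda')_1 = r$ and has exactly $\lambda_1$ parts. A type-$1$ move on $\lambda$ that keeps rows indexed by a proper subsequence $i_1, \ldots, i_\ell$ of $1, \ldots, r$ produces $\mu = \br{\lambda_{i_1}, \ldots, \lambda_{i_\ell}}$. Applying a type-$2$ move to $\lambda'$ with the \emph{same} index set $i_1, \ldots, i_\ell$ — which is a valid proper subsequence of $1, \ldots, (\lambda')_1 = r$ — yields $\br{(\lambda')'_{i_1}, \ldots, (\lambda')'_{i_\ell}}' = \br{\lambda_{i_1}, \ldots, \lambda_{i_\ell}}' = \mu'$, using $(\lambda')' = \lambda$. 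Thus conjugating a row option of $\lambda$ gives a column option of $\lambda'$, with matching index sets, and the symmetric computation (a column option of $\lambda$ maps to a row option of $\lambda'$, since the number of parts of $\lambda'$ equals $\lambda_1$) shows the correspondence in the other direction. Since every option strictly decreases $n$, the induction is well-founded.

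For the inductive step I would argue that, because $\mu \mapsto \mu'$ is a bijection from the options of $\lambda$ onto the options of $\lambda'$ and each such option has smaller size, the inductive hypothesis gives $\sgpnim(\mu) = \sgpnim(\mu')$ for every option $\mu$; hence the two sets $\{\sgpnim(\mu) : \lambda \to \mu\}$ and $\{\sgpnim(\nu) : \lambda' \to \nu\}$ coincide, and taking $\mex$ of equal sets yields $\sgpnim(\lambda) = \sgpnim(\lambda')$. The misère statement is handled by the identical recursion: the only change is that the terminal $\br{\,}$ is assigned value $1$ instead of $0$, and since $\br{\,}$ is self-conjugate this assignment is consistent with the correspondence, so the same induction gives $\sgpnim^-(\lambda) = \sgpnim^-(\lambda')$. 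I expect the only delicate point to be the bookkeeping in the move correspondence — confirming that the ``proper subsequence'' constraints and the merging of surviving rows/columns translate exactly under conjugation — but conceptually this is nothing more than the statement that conjugation swaps the roles of rows and columns, so the two clauses of the definition are exchanged verbatim.
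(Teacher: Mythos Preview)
Your proposal is correct. The paper states this result as an Observation without proof, and your argument --- that conjugation is an involution on $\mathbb{Y}$ fixing the terminal position and interchanging row moves with column moves, hence inducing an isomorphism of game trees --- is precisely the intended justification, spelled out in full detail.
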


The following result is perhaps surprising since the number of moves on $\lambda$ would seem to grow exponentially in the largest part and number of parts of $\lambda$. We denote the Sprague-Grundy value of a position in \pnim{} by $\sgpnim$. 


\begin{prop}
    The longest \pnim{} play from $\br{\lambda_1,\dots,\lambda_r}\neq \br{ \, }$ is of length $\lambda_1+r-1$.
\end{prop}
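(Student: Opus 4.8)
The plan is to prove matching upper and lower bounds on the length of a play reaching the terminal position $\br{\,}$. For the upper bound I would introduce a potential function: for a nonempty partition with $r$ rows and largest part $\lambda_1$ (equivalently, $\lambda_1$ columns) set $f(\lambda) = r + \lambda_1$, and put $f(\br{\,}) = 0$. The guiding observation is that any single move decreases $f$ by at least one, while the unique move that reaches the terminal position decreases $f$ by at least two, because every nonempty partition has $f \geq 2$. This extra unit on the last step is exactly what produces the $-1$ in the formula.

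First I would verify the per-move decrease. A row move deletes $\ell' \geq 1$ rows, so the number of rows drops from $r$ to $r - \ell'$ while the largest part cannot increase (it becomes the maximum over a subsequence of the old parts). Hence $f$ decreases by at least $\ell' \geq 1$. The argument for column moves is identical after conjugating, and $f$ is conjugation-symmetric, so no separate computation is needed. Consequently, in a play $\lambda = p_0 \to p_1 \to \cdots \to p_k = \br{\,}$ of length $k$, each of the first $k-1$ transitions lands on a nonempty (non-terminal) position and so lowers $f$ by at least one, while the final transition $p_{k-1} \to \br{\,}$ lowers it by at least two. Telescoping gives $f(\lambda) = \sum_{i=1}^{k}\bigl(f(p_{i-1}) - f(p_i)\bigr) \geq (k-1) + 2 = k + 1$, whence $k \leq f(\lambda) - 1 = \lambda_1 + r - 1$; the degenerate case $k = 1$ (e.g. $\lambda = \br{1}$) is covered by the same inequality.

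For the lower bound I would exhibit an explicit play attaining this length. Starting from $\br{\lambda_1, \ldots, \lambda_r}$, repeatedly delete the last row: each such deletion is the row move retaining the proper subsequence $1, \ldots, r-1$, it keeps the largest part equal to $\lambda_1$, and it reduces the number of rows by one, so after $r-1$ moves we reach the single row $\br{\lambda_1}$. Then delete one column at a time, $\br{\lambda_1} \to \br{\lambda_1 - 1} \to \cdots \to \br{1}$, using $\lambda_1 - 1$ moves, and finish with $\br{1} \to \br{\,}$. This play uses $(r-1) + (\lambda_1 - 1) + 1 = \lambda_1 + r - 1$ moves, matching the upper bound.

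The part to get right is the accounting for the terminal move: one must recognize that collapsing a nonempty partition to $\br{\,}$ simultaneously eliminates a row-dimension and a column-dimension, so it is worth two units of potential rather than one. Once the potential $f$ is chosen, the monotonicity under arbitrary (multi-row or multi-column) deletions is routine, and the lower bound is a direct construction, so I expect no genuine obstacle beyond correctly isolating this final-step contribution.
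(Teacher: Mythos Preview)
Your proof is correct and follows essentially the same approach as the paper: both introduce the potential $f(\mu)=\#\text{rows}+\#\text{columns}$, argue that each move drops $f$ by at least one with the terminal move dropping it by at least two, and then exhibit an explicit play of the claimed length. The only cosmetic difference is the lower-bound construction (you strip the last row repeatedly and then peel columns, whereas the paper iteratively removes the second row or column), which is immaterial.
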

\begin{proof}
For any partition $\mu$ we define $f(\mu)$ to be the cumulative count of its rows and columns. 
It is easy to see that  the longest \pnim{} play from $\lambda=\br{\lambda_1,\dots,\lambda_r}$ does not exceed $f(\lambda)-1$. 
This follows from the fact that for every $\mu$ such that $\lambda \to \mu$ we have $f(\lambda)-f(\mu)\ge 1$, while for the last move we have $f(\lambda)-f(\br{ \, })\ge 2$.

To conclude the proof observe that the play of length $f(\lambda)-1$ can always be realised by iteratively removing the 2nd row, or the 2nd column, as long a possible, reaching a partition $\br{1}$ after $\lambda_1+r-2$ moves.
\end{proof}

Since the longest play is a trivial upper bound for Sprague-Grundy value, \cref{prop:bound} follows directly. 
We say that a position is \emph{heavy} if its Sprague-Grundy value is the length of the longest play. It turns out that, under $1$-\pnim{}, many families of partitions are in fact heavy.
This motivates \cref{prop:hook,prop:padded_stair,obs:lucas,conj:heavychoppedrect,conj:shallowstair,conj:heavy}.

\begin{prop}\label{prop:hook}
    Let $r$ and $c$ be positive integers. 
    Then  $\br{c, 1^{r-1}}$ is heavy.
\end{prop}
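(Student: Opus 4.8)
The plan is to prove the equivalent numerical statement $\sgpnim(\br{c,1^{r-1}}) = c+r-1$. Indeed, the preceding proposition identifies $c+r-1$ as the length of the longest play from $\br{c,1^{r-1}}$ (here the largest part is $\lambda_1=c$ and the number of parts is $r$), so attaining this value is precisely what it means for the hook to be heavy. The upper bound $\sgpnim(\br{c,1^{r-1}}) \le c+r-1$ is immediate from \cref{prop:bound}, so the entire task reduces to the lower bound $\sgpnim(\br{c,1^{r-1}}) \ge c+r-1$; that is, to exhibiting among the options of $\br{c,1^{r-1}}$ positions whose Sprague-Grundy values realize every element of $\{0,1,\dots,c+r-2\}$.

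I would argue by induction on $c+r$, the base case $c=r=1$ being the single cell $\br{1}$, whose only option is the empty partition. For the inductive step, \cref{obs:conjinvariance} together with the symmetry of $c+r-1$ lets me assume without loss of generality that $c \ge r$: conjugating the hook $\br{c,1^{r-1}}$ yields the hook $\br{r,1^{c-1}}$ and preserves the Sprague-Grundy value. Two families of options then suffice. First, deleting the tall first column and keeping $j$ of the remaining $c-1$ height-one columns is a legal column move (proper, since column $1$ is dropped) that merges to the single row $\br{j}$; for $1 \le j \le c-1$ the inductive hypothesis gives $\sgpnim(\br{j})=j$, and deleting all columns reaches the terminal of value $0$, so this family realizes $0,1,\dots,c-1$. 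Second, keeping the long first row together with $j$ of the $r-1$ singleton rows is a legal row move, proper precisely because $j \le r-2$, and produces the shorter hook $\br{c,1^j}$ of value $c+j$ by the inductive hypothesis; as $j$ ranges over $0,\dots,r-2$ this realizes $c,c+1,\dots,c+r-2$.

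Because $c \ge r$, the two realized blocks $\{0,\dots,c-1\}$ and $\{c,\dots,c+r-2\}$ abut without a gap and cover $\{0,1,\dots,c+r-2\}$ in full. Hence the mex of the option values is at least $c+r-1$, which together with the upper bound from \cref{prop:bound} forces equality and proves that $\br{c,1^{r-1}}$ is heavy.

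I expect the main obstacle to be the bookkeeping behind the move enumeration rather than any deep idea: one must verify that each prescribed deletion is genuinely a \emph{proper} subsequence of rows or columns, and, crucially, that after the merging rule of \pnim{} the resulting diagram is exactly the claimed smaller hook (or single row). Making the reduction $c \ge r$ explicit via \cref{obs:conjinvariance} is what keeps the final interval computation free of casework; without it one would additionally have to track the column moves to shorter hooks $\br{j,1^{r-1}}$ and the single-column options $\br{1^j}$ in order to close the gap when $r > c$.
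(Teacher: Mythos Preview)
Your proof is correct and follows essentially the same approach as the paper: induction on $c+r$, a WLOG reduction to $c\ge r$ (the paper writes $r\le c$ without naming \cref{obs:conjinvariance}, but the underlying symmetry is the same), single-row options $\br{j}$ for the low values $0,\dots,c-1$, and shorter hooks $\br{c,1^{j}}$ for the high values $c,\dots,c+r-2$, with the upper bound closed off by \cref{prop:bound}. The only cosmetic difference is that the paper removes a contiguous initial block of columns whereas you drop column~$1$ together with an arbitrary selection of the height-one columns; both are legal \pnim{} moves and yield the same target partitions.
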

\begin{proof}
We prove the claim by induction on $r+c$.
     Since  $\sgpnim(\br{1}]) = 1$, we assume $r+c>2$. 
    For any $k < r + c - 1$, we claim that there is a move $\lambda \to \mu$ such that $\mu \vdash k$. Without loss of generality suppose $r \leq c$. If $k < c$ then remove the first $c-k$ columns of $\br{c, 1^{r-1}}$ to obtain  $\mu = \br{k}$. If $k \geq c$ remove the last $r+c-k-1$ rows of $\br{c, 1^{r-1}}$ to obtain $\mu = \br{c, 1^{k-c}}$. In either case, $\sgpnim(\mu) = k$. 
    
    There is a move giving a position with Sprague-Grundy value equal to any value less than $r + c - 1$, so $\sgpnim(\br{c, 1^{r-1}}) \geq r + c - 1$. By \cref{prop:bound}, we have $\sgpnim(\br{c, 1^{r-1}})  \leq r + c - 1.$ Thus, equality holds. 
\end{proof}

\noindent For $c\ge r \ge 1$ let $\str_{c,r}$ denote the partition $\br{c,c-1,\dots,c-r+1}$.

\begin{prop}
\label{prop:padded_stair}    For $c\ge r \ge 1$, the partition $\str_{c,r}$ is heavy.
\end{prop}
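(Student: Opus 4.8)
The plan is to induct on $c+r$, with base case $\str_{1,1}=\br 1$ (immediate). The upper bound $\sgpnim(\str_{c,r})\le c+r-1$ is furnished by \cref{prop:bound}, since $\str_{c,r}$ has largest part $c$ and $r$ rows. For the matching lower bound I would exhibit, for every target value $k\in\{0,1,\dots,c+r-2\}$, a single move from $\str_{c,r}$ to a position of Sprague-Grundy value exactly $k$; the $\mex$ rule then forces $\sgpnim(\str_{c,r})\ge c+r-1$, giving equality. Throughout I would use the inductive hypothesis that every staircase $\str_{p,q}$ with $p\ge q\ge 1$ and $p+q<c+r$ is heavy, together with conjugate invariance (\cref{obs:conjinvariance}).

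The value $k=0$ is realised by removing all rows. For the rest I would read off the column-height profile of $\str_{c,r}$: exactly $c-r+1$ columns have height $r$, and there is a single column of each of the heights $r-1,r-2,\dots,1$. This drives two families of column moves. First, for $1\le s\le a\le r$, keeping one column of each of the heights $a,a-1,\dots,a-s+1$ is a legal column move whose result is the conjugate of $\str_{a,s}$; by conjugate invariance and induction its value is $a+s-1$, and as $(a,s)$ ranges over this region these values sweep out all of $\{1,\dots,2r-1\}$. Second, for $r\le c'\le c-1$, keeping $c'-r+1$ of the height-$r$ columns together with all the columns of heights $r-1,\dots,1$ reaches the smaller staircase $\str_{c',r}$, whose value is $c'+r-1$ by induction; these sweep out $\{2r-1,\dots,c+r-2\}$. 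Since the two ranges overlap at $2r-1$, together with $k=0$ they cover $\{0,1,\dots,c+r-2\}$.

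The hard part will be the intermediate Grundy values, those lying strictly between the small values obtainable from short single rows or columns and the large values near $c+r-2$. Truncating rows can only produce staircases whose longest row is at least $c-r+1$, and the obvious column truncations produce triangular staircases that supply only the odd values $1,3,\dots,2r-3$; neither device alone closes the gap $\{r+1,\dots,c-r\}$ that opens when $c$ is large. The resolution is the second family above: because $\str_{c,r}$ carries an entire block of height-$r$ columns, one may keep a variable number of them and thereby reach $\str_{c',r}$ for every intermediate width $c'$. The remaining work is bookkeeping, namely checking that each prescribed move keeps a proper (hence strictly smaller) set of columns, that all the required heights are present, and that every sub-staircase invoked satisfies the inductive size bound $p+q<c+r$. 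I would treat the boundary case $c=r$ separately: there the second family is empty and $(a,s)=(r,r)$ is not a proper move, but the first family already yields $\{1,\dots,2r-2\}$, which together with $0$ suffices.
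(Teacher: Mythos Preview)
Your argument is correct and follows essentially the same approach as the paper: both proofs combine the upper bound from \cref{prop:bound} with explicit column moves to (conjugates of) smaller staircases to realise every target Sprague-Grundy value. Your organisation is slightly tidier---you induct on $c+r$ (the paper nominally inducts on $r$, but its third case tacitly also inducts on $c$ since $\str_{c-i,r}$ keeps the same $r$), and you merge the paper's first two cases into a single family of sub-staircases $\str_{a,s}$---but the underlying ideas are identical.
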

\begin{proof}
    By \cref{prop:bound} it is enough to prove that from $\str_{c,r}$ we can reach a \mbox{$j$-position} for any $j\in [0,r-1]\cup[r, 2r-2]\cup[2r-1, c+r-2]$.
    We prove the claim by induction on $r$.
    For the base case $r=1$ the result follows from \cref{prop:hook}.
    For  the induction step assume that $r>1$. We show the existence of a move to a $j$-position in three steps. In particular,
    \begin{description}
        \item[Case {$j\in[0,r-1]$:}] we obtain a $j$-position for any $j \in [0,r-1]$ by removing everything except a single column of length $j$ if $j > 0$, or by removing everything if $j = 0$.
        \item[Case {$j\in [r, 2r-2]$:}] we obtain a $(2r-i-1)$-position for any $i\in [r-1]$ by removing all but $1$ column of length $r$ and all columns of length at most $i$, yielding a partition conjugate to $\str_{r,r-i}$, which has the same $\sgpnim$ value, by \cref{obs:conjinvariance}.
        \item[Case {$j\in[2r-1, c+r-2]$:}] we obtain a $((c-i)+r-1)$-position $\str_{c-i,r}$ for any $i\in [c-r]$ by removing $i$ columns of length $r$ by induction. This gives all values of $j$ between $c+r-1-(c-r)=2r-1$ and $c+r-2$ inclusive. \qedhere
    \end{description}
\end{proof}

We now turn our attention to rectangular partitions. In order to determine which rectangles are heavy, we first determine their  
Sprague-Grundy values, which in turn reveal a nondisjunctive connection to nimber arithmetic.
\thmrect*
We proceed with two technical lemmas.
Although both lemmas could be incorporated directly into the proof of \cref{thm:rect}, we state them separately for future reference in \cref{sec:rnim}.

\begin{lemma}\label{lem:shifted_mex}
Let $k\geq 1$ and $S\subseteq \mathbb Z_{\geq 0}$. Then
\[\mex(S)+k=\mex(\{0,\ldots,k-1\}\cup \{s+k:s\in S\}).\]
\end{lemma}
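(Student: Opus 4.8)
The plan is to reduce the statement to the defining characterization of $\mex$: for a set $A \subseteq \mathbb{Z}_{\geq 0}$ and $m \in \mathbb{Z}_{\geq 0}$, one has $\mex(A) = m$ if and only if $\{0, \ldots, m-1\} \subseteq A$ and $m \notin A$. Writing $m = \mex(S)$ and $T = \{0,\ldots,k-1\} \cup \{s+k : s \in S\}$, it then suffices to verify these two conditions for the target value $m+k$ and the set $T$, namely that $\{0,\ldots,m+k-1\} \subseteq T$ and that $m+k \notin T$.

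First I would establish the containment $\{0,\ldots,m+k-1\} \subseteq T$ by splitting on the threshold $k$. Any $n$ with $0 \le n < k$ lies in $T$ trivially, since $\{0,\ldots,k-1\}$ is one of the two blocks defining $T$. For any $n$ with $k \le n < m+k$, I would set $s = n-k$, so that $0 \le s < m$; because $m = \mex(S)$, the whole initial segment $\{0,\ldots,m-1\}$ is contained in $S$, hence $s \in S$ and therefore $n = s+k \in \{s+k : s \in S\} \subseteq T$. This covers every integer below $m+k$.

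Next I would show $m+k \notin T$. Since $m \ge 0$ we have $m+k \ge k$, so $m+k$ cannot lie in the first block $\{0,\ldots,k-1\}$; thus it would have to lie in the second block, i.e.\ $m+k = s+k$ for some $s \in S$, forcing $m = s \in S$ and contradicting $m = \mex(S)$. Combining the two conditions gives $\mex(T) = m+k = \mex(S)+k$, as claimed.

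I do not anticipate a genuine obstacle here: the argument is a direct bookkeeping exercise in the $\mex$ characterization, and the only point requiring care is keeping the two defining blocks of $T$ straight across the threshold $k$ when checking the covering condition. The payoff is that this clean shift-by-$k$ identity can then be invoked as a black box inside the proof of \cref{thm:rect} and later in \cref{sec:rnim}, which is precisely why it is isolated as a standalone lemma.
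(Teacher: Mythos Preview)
Your proof is correct. The paper's own argument is also a direct unwinding of the definition, but it phrases $\mex$ as $\min(\mathbb Z_{\geq 0}\setminus S)$ and pushes the shift by $k$ through the $\min$ and the set complement in one chain of equalities, rather than separately verifying the two conditions $\{0,\ldots,m+k-1\}\subseteq T$ and $m+k\notin T$ as you do. The two arguments are equivalent in content and length; your characterization-based version makes the ``covering below, missing at the target'' structure slightly more explicit, while the paper's $\min$-manipulation is a bit more compact. Neither approach offers any real advantage over the other for a lemma this elementary.
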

\begin{proof}\belowdisplayskip=-12pt
The claim immediately follows due to
 \begin{align*}
 k+\mex(S)&=k+\min(\mathbb Z_{\geq0} \setminus S)\\
 &=\min(\mathbb Z_{\geq k} \setminus \{s+k:s\in S\})\\
 &=\min(\mathbb Z_{\geq 0} \setminus (\{0,\ldots,k-1\} \cup  \{s+k:s\in S\}))\\
 &=\mex(\{0,\ldots,k-1\}\cup \{s+k:s\in S\}).
 \end{align*}
\end{proof}

We only need case $k=1$ of the next lemma to prove \cref{thm:rect}, but we present the general form as the proof requires no additional effort.

\begin{lemma}\label{lem:nimformula}
Let $r,c,k$ be positive integers with $r,c\geq k$. Then 
\[
((c-k)\oplus (r-k))+k=\mex\!
\begin{pmatrix}
    0,\ldots,k-1,\\
    ((r-k)\oplus 0)+k,\ldots,((r-k) \oplus (c-k-1))+k,\\
    (0 \oplus (c-k))+k,\ldots,((r-k-1)\oplus (c-k))+k)
\end{pmatrix}.
\]  
\end{lemma}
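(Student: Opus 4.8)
The plan is to peel off the additive shift by $k$ and reduce the identity to the classical $\mex$ computation underlying two-pile \nim{}. Writing $R=r-k$ and $C=c-k$ (both nonnegative since $r,c\geq k$), the two nontrivial rows inside the $\mex$ become $\{(R\oplus j)+k : 0\le j\le C-1\}$ and $\{(i\oplus C)+k : 0\le i\le R-1\}$. I recognize these as exactly $\{s+k : s\in S\}$, where
\[
S=\{R\oplus j : 0\le j< C\}\cup\{i\oplus C : 0\le i< R\}
\]
is the set of Grundy values of the options of the position $(R,C)$ in two-pile \nim{}. The target value $((c-k)\oplus(r-k))+k$ equals $(R\oplus C)+k$, i.e.\ $k$ plus the Grundy value of $(R,C)$.

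The core step is to prove $\mex(S)=R\oplus C$. This is Bouton's theorem for two piles, and since it is short I would include it. For one direction, no option equals $R\oplus C$: if $R\oplus j=R\oplus C$ then cancellation forces $j=C$, impossible for $j<C$, and symmetrically $i\oplus C=R\oplus C$ forces $i=R$. For the covering direction, fix $v$ with $0\le v< R\oplus C$, set $d=v\oplus(R\oplus C)$, and let $b$ be the most significant set bit of $d$. Since $v<R\oplus C$ and $b$ is the top differing bit, bit $b$ is set in $R\oplus C$, hence in exactly one of $R,C$; say it is set in $R$. Then $R':=R\oplus d$ satisfies $R'<R$ (the top bit of $d$ is cleared in $R$) and $R'\oplus C=v$, so $v\in\{i\oplus C : i<R\}\subseteq S$. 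Thus every value below $R\oplus C$ is attained and $\mex(S)=R\oplus C$.

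Finally I would invoke \cref{lem:shifted_mex} with this set $S$ and the given $k\ge 1$, giving
\[
(R\oplus C)+k=\mex(S)+k=\mex\bigl(\{0,\ldots,k-1\}\cup\{s+k:s\in S\}\bigr),
\]
and the right-hand side is precisely the $\mex$ in the statement. The degenerate cases $R=0$ or $C=0$ (that is, $r=k$ or $c=k$) need no separate treatment: one of the two families is empty and $S$ reduces to an initial segment $\{0,\ldots,\max(R,C)-1\}$ of $\ZZ^{\geq 0}$, for which $\mex(S)=R\oplus C$ still holds.

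I do not anticipate a serious obstacle, since the only genuine content is the two-pile \nim{} $\mex$ computation and the shift is handled mechanically by \cref{lem:shifted_mex}. The one point requiring care is the bookkeeping that the two option-families coincide with $\{s+k:s\in S\}$ exactly, including at the empty boundary cases, so that the substitution $R=r-k$, $C=c-k$ matches the index ranges in the statement.
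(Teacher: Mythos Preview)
Your proof is correct and follows essentially the same route as the paper: substitute $R=r-k$, $C=c-k$, identify the inner set as the option-values of two-pile \nim{} so that $\mex(S)=R\oplus C$, and then apply \cref{lem:shifted_mex} to handle the additive shift by $k$. The only difference is that you spell out the two-pile Bouton argument for $\mex(S)=R\oplus C$, whereas the paper simply invokes it as the $\mex$-characterization of Nim-sum.
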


\begin{proof} By definitions of Nim-sum and mex, and for integers $r,c\geq0$ we have 
\begin{align*}
     r \oplus c &=\mex \!{\bigl (}\{r '\oplus c :0 \leq r '<r \}\cup \{r \oplus c ':0 \leq c '<c \}{\bigr )} 
\end{align*}
\noindent For $r,c\geq k >0$, substituting $r - k$ for $r$ and $c-k$ for $c$ yields $(r -k) \oplus (c -k)=$
 \[ \mex \bigl (   \{r '\oplus (c-k) :0 \leq r '<r-k \}\cup \{(r-k) \oplus c ':0 \leq c '<c-k \} \bigr ). \]

By \cref{lem:shifted_mex} we obtain 
\[
((r -k) \oplus (c -k)) +k 
=
\mex 
\begin{pmatrix}
    [0,k-1]   \cup  \{(r '\oplus (c-k))+k :0 \leq r '<r-k \} \\
     \phantom{[0,k-1]} \cup \{((r-k) \oplus c')+k:0 \leq c '<c-k \} 
\end{pmatrix}, \]
as desired. \end{proof} 

\noindent We are now in position to prove \cref{thm:rect}.  

\begin{proof}[Proof of \cref{thm:rect}]
We use induction on $r+c$. It holds when $r+c=2$. For $r+c>2$ assume that the theorem holds for all rectangles smaller than $\br{c^r}$. The set of moves from $\br{c^r}$ is 
$\{\br{ \, }\} \cup \{\br{a^r} : 1\leq a <c\} \cup \{\br{c^b} : 1 \leq b <r\}$ so
\begin{align*}
\sgpnim(\br{c^r})&=\mex
\begin{pmatrix}
0,&((r-1)\oplus 0)+1,\ldots,((r-1)\oplus (c-2))+1,\\
&(0 \oplus (c-1))+1,\ldots, ((r-2) \oplus (c-1))+1
\end{pmatrix}\\
&=((r-1)\oplus(c-1))+1,
\end{align*}
where the last equality follows from \cref{lem:nimformula} for $k=1$.
\end{proof}
Which rectangles are heavy? The answer is a consequence of Lucas' number-theoretic result. 
\begin{thm}[Lucas \cite{lucas}]
    Let $m$ and $n$ be non-negative integers, let $p$ be a prime, and let 
\begin{align*}
    m&=m_kp^k+m_{k-1}p^{k-1}+\cdots +m_1p+m_0 \quad \text{ and}\\
    n&=n_kp^k+n_{k-1}p^{k-1}+\cdots +n_1p+n_0
\end{align*}
be the base-$p$ expansions of $m$ and $n$ respectively. Then
\[\binom{m}{n}\equiv\prod_{i=0}^k\binom{m_i}{n_i}\pmod p.\]
\end{thm}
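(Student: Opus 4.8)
The plan is to prove Lucas' theorem by comparing coefficients of a polynomial identity modulo $p$. The engine is the ``freshman's dream'' congruence: since $p$ is prime, each intermediate binomial coefficient $\binom{p}{j}$ with $0 < j < p$ is divisible by $p$ (the single factor of $p$ in the numerator $p!$ is cancelled by neither $j!$ nor $(p-j)!$, as $j, p-j < p$), and therefore $(1+x)^p \equiv 1 + x^p \pmod p$ as polynomials in $x$. Raising this to the $p$-th power repeatedly, or inducting on $i$, yields $(1+x)^{p^i} \equiv 1 + x^{p^i} \pmod p$ for every $i \geq 0$.

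Next I would use the base-$p$ expansion of the exponent $m$ to factor the left-hand side. Writing $m = \sum_{i=0}^k m_i p^i$, we have
\[
(1+x)^m = \prod_{i=0}^k \bigl((1+x)^{p^i}\bigr)^{m_i} \equiv \prod_{i=0}^k \bigl(1 + x^{p^i}\bigr)^{m_i} \pmod p,
\]
and expanding each factor by the binomial theorem gives
\[
\prod_{i=0}^k \bigl(1 + x^{p^i}\bigr)^{m_i} = \prod_{i=0}^k \sum_{n_i=0}^{m_i} \binom{m_i}{n_i} x^{n_i p^i}.
\]

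The final step is to read off the coefficient of $x^n$ on both sides. On the left it is $\binom{m}{n}$. On the right, a monomial $x^{\sum_i n_i p^i}$ arises by choosing one term $\binom{m_i}{n_i} x^{n_i p^i}$ from each factor, with $0 \leq n_i \leq m_i \leq p-1$. Because every digit $n_i$ lies in $[0, p-1]$, the exponents $\sum_i n_i p^i$ are exactly the base-$p$ representations, hence pairwise distinct, so for the specific $n = \sum_i n_i p^i$ there is no merging of terms and the coefficient of $x^n$ on the right is precisely $\prod_{i=0}^k \binom{m_i}{n_i}$. Comparing the two coefficients modulo $p$ yields the claimed congruence. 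If some digit $n_i$ of $n$ exceeds $m_i$, the corresponding factor $\binom{m_i}{n_i}$ vanishes, matching the fact that the coefficient of $x^n$ on the right is then $0$.

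The main obstacle---more a bookkeeping subtlety than a genuine difficulty---is justifying that the coefficient extraction on the right is unambiguous: one must invoke the uniqueness of base-$p$ representations to guarantee that distinct digit tuples $(n_0, \ldots, n_k)$ produce distinct exponents, so that the coefficient of $x^n$ collects a single product rather than a sum of several contributions. Everything else reduces to the Frobenius congruence and the binomial theorem.
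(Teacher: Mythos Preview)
Your proof is correct and is in fact the standard generating-function argument for Lucas' theorem. Note, however, that the paper does not supply its own proof of this statement: it quotes Lucas' theorem as a classical result with a citation and then invokes only its well-known corollary (\cref{cl:lucas}) about the parity of binomial coefficients. So there is no ``paper's proof'' to compare against here; your argument simply fills in a proof that the authors chose to outsource to the literature.
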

\noindent The following is a well-known consequence of Lucas' theorem \cite{rowland2011numbernonzerobinomialcoefficients}. 
\begin{cor}\label{cl:lucas}
    The value $\binom{m}{n}$ is odd if and only if the position of $1$-digits in the binary representation of  $n$ are a subset of those in $m$.
\end{cor}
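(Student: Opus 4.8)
The plan is to specialize Lucas' theorem to the prime $p=2$ and read off the parity of the resulting product. Write the binary expansions $m = \sum_{i=0}^k m_i 2^i$ and $n = \sum_{i=0}^k n_i 2^i$, where now every digit $m_i, n_i \in \{0,1\}$. Lucas' theorem then yields $\binom{m}{n} \equiv \prod_{i=0}^k \binom{m_i}{n_i} \pmod 2$, reducing the whole question to understanding this product of ``single-bit'' binomial coefficients.

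First I would carry out the tiny case analysis on the four possible factors: $\binom{0}{0} = \binom{1}{0} = \binom{1}{1} = 1$, while $\binom{0}{1} = 0$. The upshot is that a factor $\binom{m_i}{n_i}$ equals $0$ precisely when $n_i = 1$ and $m_i = 0$, and equals $1$ otherwise. In particular each factor lies in $\{0,1\}$, so the product lies in $\{0,1\}$ as well.

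Next I would observe that a product of factors drawn from $\{0,1\}$ is equal to $1$ (hence odd) if and only if none of the factors is $0$. By the previous step, this happens exactly when there is no index $i$ with $n_i = 1$ and $m_i = 0$, which is the same as saying that every position holding a $1$-digit in $n$ also holds a $1$-digit in $m$. This is precisely the subset condition in the statement, so $\binom{m}{n}$ is odd if and only if the $1$-positions of $n$ form a subset of those of $m$.

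There is essentially no obstacle here: the result is a direct specialization of Lucas' theorem, and the only content is the digit-by-digit verification that the sole vanishing single-bit coefficient is $\binom{0}{1}$. The one point worth stating carefully is the equivalence ``product of $\{0,1\}$-valued factors is nonzero $\iff$ no factor vanishes,'' which is what translates the product being odd into the combinatorial subset condition.
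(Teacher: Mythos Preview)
Your argument is correct and is exactly the standard derivation of this fact from Lucas' theorem. The paper itself does not supply a proof; it simply notes that the statement is a well-known consequence of Lucas' theorem and cites \cite{rowland2011numbernonzerobinomialcoefficients}, so there is nothing further to compare.
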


We are now ready to characterize heavy rectangles, i.e., those for which we can replace the Nim-sum in \cref{thm:rect} with ordinary addition.  
\begin{obs}\label{obs:lucas}
    The rectangle $\br{c^r}$ is heavy if and only if $\binom{c + r -2}{r-1}$ is odd.  
\end{obs}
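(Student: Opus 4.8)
The plan is to unwind the definition of heaviness and reduce the claim to a purely arithmetic equivalence, which \cref{cl:lucas} then resolves. By definition, $\br{c^r}$ is heavy precisely when $\sgpnim(\br{c^r})$ equals the length of the longest play, and for a rectangle that length is $c + r - 1$. Substituting the value supplied by \cref{thm:rect}, heaviness reads $((r-1)\oplus(c-1)) + 1 = c + r - 1$, i.e. $(r-1)\oplus(c-1) = (r-1) + (c-1)$. Thus $\br{c^r}$ is heavy if and only if the Nim-sum of $r-1$ and $c-1$ coincides with their ordinary sum.

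Next I would invoke the standard fact that for nonnegative integers $a$ and $b$ one has $a \oplus b = a + b$ exactly when the binary addition of $a$ and $b$ produces no carries, equivalently when the positions of the $1$-digits of $a$ and of $b$ are disjoint. Setting $a = r-1$ and $b = c-1$, so that $a + b = c + r - 2$, it remains to match this disjointness condition with the subset condition that \cref{cl:lucas} attaches to $\binom{c+r-2}{r-1}$, namely that the $1$-digits of $r-1$ form a subset of those of $c+r-2$. One direction is immediate: if $r-1$ and $c-1$ have disjoint $1$-digits, then $c+r-2 = (r-1)\oplus(c-1)$ and its $1$-digits are the disjoint union of those of $r-1$ and $c-1$, so in particular the $1$-digits of $r-1$ are among them.

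The converse is the crux, and I would argue it by contrapositive. Suppose $r-1$ and $c-1$ share a $1$-digit, and let $i$ be the least position at which both do. A short induction on the lower positions shows that no carry is generated below position $i$, so at position $i$ the two $1$-digits together with a zero carry-in yield a $0$ in the sum; hence $c+r-2$ carries a $0$ in position $i$ while $r-1$ carries a $1$ there, so the $1$-digits of $r-1$ are not a subset of those of $c+r-2$. Chaining the equivalences, heaviness of $\br{c^r}$ holds if and only if the $1$-digits of $r-1$ lie among those of $c+r-2$, which by \cref{cl:lucas} is exactly the condition that $\binom{c+r-2}{r-1}$ is odd.

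The only genuine obstacle is this last carry-propagation step, showing that a shared lowest $1$-bit survives as a $0$ in the sum; everything else is bookkeeping, consisting of the reduction through \cref{thm:rect} together with the longest-play count, plus the elementary characterization of when a Nim-sum equals an ordinary sum.
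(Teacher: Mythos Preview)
Your proof is correct and follows the same route as the paper's: reduce heaviness to $(r-1)\oplus(c-1)=(r-1)+(c-1)$ via \cref{thm:rect} and the longest-play count, then identify this with the subset condition from \cref{cl:lucas}. The paper simply asserts that equivalence in one line, whereas you spell out the carry-propagation argument; so your version is a more detailed account of the same idea rather than a different approach.
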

\begin{proof}

\noindent Substituting $m$ and $n$ with $c + r -2$ and $r-1$ in \cref{cl:lucas} yields 
\[
\binom{c + r -2}{r-1}\quad\text{odd} \iff (c-1)\oplus(r-1)=(c-1) + (r-1), 
\]
from where our claim follows due to \cref{thm:rect}.
\end{proof}

The next result characterizes heaviness for another family of partitions, namely those with two parts. 

\begin{prop}\label{prop:2rows}
     Let $c_1$ and $c_2$ be integers with $c_1 \geq c_2 > 0$. Then 
    \[\sgpnim(\br{c_1, c_2}) = \begin{cases}
        c_1 - 1 & \textnormal{if }c_1 = c_2 \textnormal{ are even} \\
        c_1 + 1 & \textnormal{otherwise} 
    \end{cases}.\]
\end{prop}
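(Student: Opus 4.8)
The plan is to split into the two cases of the statement. When $c_1 = c_2 = c$, the partition $\br{c_1, c_2} = \br{c^2}$ is a rectangle, so \cref{thm:rect} gives $\sgpnim(\br{c^2}) = (1 \oplus (c-1)) + 1$. It then suffices to observe the parity computation $1 \oplus (c-1) = c - 2$ when $c$ is even and $1 \oplus (c-1) = c$ when $c$ is odd (adding $1$ in the $\oplus$ sense merely toggles the last bit of $c-1$), which yields the values $c-1$ and $c+1$ respectively, matching the claim.

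The substantive case is $c_1 > c_2$, where I claim $\sgpnim(\br{c_1,c_2}) = c_1 + 1$; I would prove this by strong induction on $c_1 + c_2$, computing the mex of the Sprague-Grundy values of all positions reachable from $\br{c_1, c_2}$. First I would enumerate the moves. The row moves (together with removing both rows) reach exactly the single rows $\br{c_1}$, $\br{c_2}$ and the empty partition. A column move keeps some $a$ of the $c_2$ height-$2$ columns and some $b$ of the $c_1 - c_2$ height-$1$ columns, producing after conjugation the partition $\br{a+b, a}$ when $a \geq 1$ and the single row $\br{b}$ when $a = 0$, subject only to $(a,b) \neq (c_2, c_1 - c_2)$, the improper ``keep everything'' choice. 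Collecting these, the reachable single rows have part sizes $\{0, 1, \ldots, c_1 - c_2\} \cup \{c_2, c_1\}$, contributing the same set of Sprague-Grundy values since $\sgpnim(\br{d}) = d$, while the reachable two-row partitions are exactly those $\br{d_1, d_2}$ with $1 \leq d_2 \leq c_2$ and $d_2 \leq d_1 \leq d_2 + (c_1 - c_2)$, excluding $\br{c_1, c_2}$ itself.

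To each reachable two-row partition I would assign its value using the inductive hypothesis for the non-square ones, namely $\sgpnim(\br{d_1, d_2}) = d_1 + 1$ when $d_1 > d_2$, and \cref{thm:rect} for the squares, namely $\sgpnim(\br{d, d}) = d + 1$ or $d - 1$ according to the parity of $d$. The heart of the argument is then a bookkeeping check that the resulting mex-set $M$ of values of reachable positions equals $\{0, 1, \ldots, c_1\}$. The inclusions are easy to arrange: $0$ comes from the empty partition, $1$ from $\br{1}$ (reachable since $c_1 - c_2 \geq 1$), $2$ from the square $\br{1,1}$, and every value $d_1 + 1$ with $3 \leq d_1 + 1 \leq c_1$ from a non-square $\br{d_1, \min(d_1 - 1, c_2)}$ with $2 \leq d_1 \leq c_1 - 1$.

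The delicate point, and the main obstacle, is verifying that $c_1 + 1$ is \emph{not} attained: the only reachable position whose value could be that large is a two-row partition with first part $d_1 = c_1$, but the reachability range $d_2 \le c_1 \le d_2 + (c_1 - c_2)$ forces $d_2 = c_2$, i.e. the excluded starting partition, so $c_1 + 1 \notin M$ and the mex is exactly $c_1 + 1$. I expect the even-square anomaly (squares of even side contributing $d - 1$ rather than $d + 1$) to be the easiest source of error here, so I would stress that the values $0, 1, 2$ and the run $3, \ldots, c_1$ are already covered without relying on any even square, whence the anomalous values create no gaps.
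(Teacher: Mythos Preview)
Your proof is correct and follows essentially the same approach as the paper: both dispose of the square case via \cref{thm:rect}, and for $c_1>c_2$ both argue by induction, exhibiting reachable positions with Sprague--Grundy values $0,1,\ldots,c_1$ and then checking that no move attains the value $c_1+1$. The only cosmetic difference is the choice of witness set---the paper uses $S=\{\br{i}:0\le i\le c_1-c_2\}\cup\{\br{c_1-c_2,1}\}\cup\{\br{c_1-c_2+i,i}:1\le i\le c_2-1\}$, which sidesteps the even-square anomaly by construction, whereas you handle it explicitly---and the paper's upper-bound argument simply observes that any column move leaves strictly fewer columns, yielding value at most $\bar c_1+1<c_1+1$ without needing to identify which $(d_1,d_2)$ with $d_1=c_1$ are reachable.
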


\begin{proof}
    The first case follows from \cref{thm:rect} with $r = 2$ as we get \[((2-1) \oplus (c_1-1))+1=(c_1-2)+1=c_1-1.\]
    If $c_1=c_2$ are odd, we get $((c_1-1) \oplus (2-1))+1=c_1+1$.
%
    If $c_1>c_2$, let $R$ and $S$ be defined by
    \begin{align*}
       R&=\{\br{c_1}, \br{c_2}\} \cup \{\br{a, b} : a \geq b \geq 0, a-b \leq c_1 - c_2, a < c_1, b \leq c_2\} \mbox{  and}\\
       S&= \{\br{i}:i \in [0,c_1-c_2]\} \cup \{\br{c_1-c_2,1}\}\cup\{\br{c_1-c_2+i,i}:i \in [1,c_2-1]\}.
   \end{align*}
    Then $R$ is the set of positions reachable from $\br{c_1, c_2}$ in one move and $S \subseteq R$, so by induction we have 
\begin{align*}
\sgpnim(\br{c_1, c_2})&=\mex(\{\sgpnim(\lambda): \lambda \in R\})\\
&\geq \mex(\{\sgpnim(\lambda): \lambda \in S\})\\
&=\mex([0,c_1-c_2]\cup\{c_1-c_2+1\}\cup [c_1-c_2+1,c_1])\\
&=\mex([0,c_1])= c_1 + 1.
\end{align*}

To see that equality holds, we need only show that no move from $\br{c_1, c_2}$ has Sprague-Grundy value $c_1 + 1$. If the move consists of removing one or both rows, then the resulting positions have Sprague-Grundy values 0, $c_1$, or $c_2$, all of which are less than $c_1 + 1$. If the move consists of removing columns, then the resulting partition has fewer columns, say $\bar c_1$ of them, so its Sprague-Grundy value is at most $\bar c_1+1$ by induction which is strictly less then $c_1+1$.
\end{proof}




We now turn our attention to the partitions which have Sprague-Grundy value $1$. As we will see later, those positions are important as they correspond exactly to the losing positions of $1$-\pnim{} under misère play.

\thmpartitioninterval*
\begin{proof}
Let
\[S=\{\br{1}\} \cup \bigcup_{r \geq 2} \{\lambda \in \mathbb Y: \br{r,r,r-1,r-2,\ldots,2}\leq \lambda \leq \br{r^r}\}.\] 
    We show that (i) every partition not in $S \cup \{\br{ \, }\}$ has a move to a partition in $S$ and (ii) there is no move from a partition in $S$ to a partition in $S$. 
\begin{enumerate}[(i)]
    \item Let $\br{\lambda_1,\dots,\lambda_r}\notin S \cup \{\br{ \, }\}$. Without loss of generality assume $\lambda_1\ge r$. 
    In this proof, set $\lambda_i=0$ for $i\in [r+1,\lambda_1]$. If $\lambda_1> \lambda_2$, then removing the first $\lambda_1-1$ columns yields $\br{1}$.
    Notice that for $2\le i\le \lambda_1$ we have 
    \[\br{\lambda_1,\dots,\lambda_r}\in S \iff \lambda_i\ge \lambda_1-i+2.\]
    Thus, there is a smallest $i > 1$ such that  $\lambda_i < \lambda_1-i+2$. Removing the first $\lambda_1-i+1$ columns gives a partition in $S$. 

    \item Note that all partitions in $S$ have rank $0$, so it is enough show that any move from a partition in $S$, other than to $\br{ \, }$, results in a nonzero rank.
    Furthermore, since $S$ is closed under conjugation, we may assume that we are removing rows.
    Indeed, removing 
    $k<r$ rows from $\lambda$ will result in a partition $\bar\lambda\geq \br{r-k+1,r-k,\ldots,2}$, thus $\bar\lambda$ has $r-k$ rows and at least $r-k+1$ columns, hence $\bar\lambda$ does not have rank $0$. \qedhere
    
\end{enumerate}    
\end{proof}

\section{\rnim{}}\label{sec:rnim}

\pnim{}, when played on  rectangles, can be viewed as a game played on several pairs $\langle r, c \rangle$ of nonnegative integers. Players alternate selecting a pair in which both entries are positive and reducing one of the entries of the chosen pair by a positive integer. The terminal positions are those positions in which at least one of the coordinates of each pair in the sequence is zero. 
\pnim{} on rectangles can be generalized to a game \rnim{} played on several hyperrectangles as follows. 

\begin{definition*}
\rnim{} is a disjunctive sum of several $1$-\rnim{} games. 
\mbox{$1$-\rnim{}} is a game in which positions are hyperrectangles. We represent a $d$-dimensional hyperrectangle as the $d$-tuple $\rs{k_1, \dots, k_d}$ of its side lengths, where each $k_j$ is a nonnegative integer. 
The terminal positions are those with a side of length zero. 
In this setting, $\rs{k_1, \dots, k_d} \to \rs{\ell_1, \dots, \ell_d}$ if there exists $i \in [d]$ such that $0 \leq \ell_i < k_i$ and $\ell_j = k_j$ for $j \neq i$. 
We denote the Sprague-Grundy value of a position in \rnim{} by $\sgrnim(\cdot)$. 
\end{definition*}

For example, play on the initial position consisting of the $d$-tuples $\rs{3,5}$ and $\rs{3,3,3}$ may proceed as shown below.
    \[
     \fcolorbox{white}{black!20}{$\rs{2,1}+\rs{3,3,3}$}
    \to
    \fcolorbox{white}{black!20}{$\rs{2,1}+\rs{3,3,2} $}
    \to
     \fcolorbox{white}{black!20}{$\rs{0,1}+  \rs{3,3,2} $}
     \to
    \fcolorbox{white}{black!20}{$\rs{0,1}+ \rs{3,0,2}$}
    \]    

In \cref{thm:formula} we  determine Sprague-Grundy values for $1$-\rnim{}, which in turn provides Sprague-Grundy values for \rnim{}.
We first establish the following supporting results.

\begin{obs}\label{obs:mex-xor}
    For nonnegative integers $k_1,\ldots,k_d$ we have
    \[\bigoplus_{i=1}^d k_i =\mex  \left(\bigcup_{i=1}^d \left\{ \left(\bigoplus_{j\neq i} k_j  \right)\oplus k_i':0 \leq k_i' <k_i \right\}\right) \]
\end{obs}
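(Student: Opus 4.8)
The plan is to recognize this as the computation of the Sprague--Grundy value of $d$-pile \nim{} (Bouton's theorem) and to verify it directly from the definition of $\mex$. Write $K = \bigoplus_{i=1}^d k_i$ and let $T$ denote the set on the right-hand side, so that $T$ consists of all values $(\bigoplus_{j \neq i} k_j) \oplus k_i'$ obtained by lowering a single coordinate $k_i$ to some $k_i' < k_i$. To show $\mex(T) = K$ it suffices to establish the two defining properties of $\mex$: (a) $K \notin T$, and (b) every $m$ with $0 \le m < K$ lies in $T$.

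For (a), suppose toward a contradiction that $K \in T$, say $K = (\bigoplus_{j \neq i} k_j) \oplus k_i'$ with $k_i' < k_i$. Since also $K = (\bigoplus_{j \neq i} k_j) \oplus k_i$, XORing both expressions with $\bigoplus_{j \neq i} k_j$ (and using that $\oplus$ is its own inverse) yields $k_i' = k_i$, contradicting $k_i' < k_i$. This direction is a one-line cancellation.

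For (b) I would use the standard highest-bit argument, which I expect to be the crux. Fix $m < K$ and let $b$ be the most significant bit in which $m$ and $K$ differ; since $m < K$, this bit equals $1$ in $K$ and $0$ in $m$. As $K = \bigoplus_i k_i$ has a $1$ in bit $b$, an odd (hence positive) number of the $k_i$ have a $1$ in bit $b$; fix one such index $i$ and set $k_i' = (\bigoplus_{j \neq i} k_j) \oplus m = K \oplus k_i \oplus m$. By construction $(\bigoplus_{j \neq i} k_j) \oplus k_i' = m$, so it remains to check $k_i' < k_i$. Since $K$ and $m$ agree above bit $b$, the number $K \oplus m$ vanishes above bit $b$ and has a $1$ in bit $b$; hence $k_i' = k_i \oplus (K \oplus m)$ agrees with $k_i$ above bit $b$ and flips the bit-$b$ entry of $k_i$ from $1$ to $0$, so $k_i' < k_i$. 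This places $m \in T$ and finishes the argument.

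An alternative better suited to this paper is to induct on $d$, using the two-pile identity $a \oplus b = \mex(\{a' \oplus b : a' < a\} \cup \{a \oplus b' : b' < b\})$ already invoked in the proof of \cref{lem:nimformula}, as both the base case and the engine of the step. Grouping $a = k_1$ against $b = \bigoplus_{j \geq 2} k_j$, the induction hypothesis gives $\mex(\tilde T) = b$ for the reachable-XOR set $\tilde T$ of the last $d-1$ coordinates, so every $t < b$ lies in $\tilde T$; writing $T = \{a' \oplus b : a' < a\} \cup \{a \oplus t : t \in \tilde T\}$, property (a) follows since $b \notin \tilde T$, and for (b) the two-pile identity supplies, for each $m < a \oplus b$, either $a' < a$ with $a' \oplus b = m$ or $b' < b$ (hence $b' \in \tilde T$) with $a \oplus b' = m$, in both cases witnessing $m \in T$. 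Either way the main obstacle is the completeness direction (b); part (a) is essentially free.
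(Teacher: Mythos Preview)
Your argument is correct. The paper's own proof, however, is a one-line citation: it simply observes that the left-hand side is by Bouton's theorem the Sprague--Grundy value of the \nim{} position with piles $k_1,\ldots,k_d$, while the set under $\mex$ on the right is exactly the set of Sprague--Grundy values of all positions reachable in one move; the identity then holds by the very definition of $\SG$. You recognize this interpretation in your opening sentence but then choose to \emph{reprove} Bouton's theorem from scratch via the standard highest-bit argument (and offer a second, inductive route through the two-pile case). What your approach buys is self-containment: a reader need not look up \cite{bouton1901}. What the paper's approach buys is economy and clarity of provenance: the observation is recorded precisely because it is a reformulation of a classical fact, and the paper wants to invoke it rather than redevelop it.
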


\begin{proof}
To prove the claim it is enough to observe that the left-hand side is equal to the Sprague-Grundy value of the \nim{} position with piles of size $k_1, \ldots, k_d$ (see \cite{bouton1901}), while the members of the set under $\mex$ on the right-hand side correspond exactly to all positions reachable from that \nim{} position.
\end{proof}

The next statement is an immediate corollary of \cref{lem:shifted_mex,obs:mex-xor}. The proof, which we omit, is similar to that of \cref{lem:nimformula}. 
\begin{cor}\label{cor:rnim}
    For integers $k_1,\ldots, k_d>k\geq 0$ we have $k+\bigoplus_{i=1}^d (k_i-k) = $
    \[\mex \left( [0,k-1] \cup \Bigg(\bigcup_{i=1}^d \{ k+\bigg( k_i' \oplus \bigoplus_{j\neq i} (k_j-k)  \bigg):0 \leq k_i' <k_i-k \}\Bigg) \right) .\]
\end{cor}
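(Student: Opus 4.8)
The plan is to mirror the proof of \cref{lem:nimformula} exactly, but starting from \cref{obs:mex-xor} instead of the two-variable recursion for Nim-sum. First I would invoke \cref{obs:mex-xor} to write
\[
\bigoplus_{i=1}^d(k_i-k)=\mex\left(\bigcup_{i=1}^d\left\{\left(\bigoplus_{j\neq i}(k_j-k)\right)\oplus k_i'':0\le k_i''<k_i-k\right\}\right),
\]
which is valid because each $k_i-k$ is a nonnegative integer under the hypothesis $k_i>k\ge 0$. This expresses the inner Nim-sum of the shifted side lengths as a $\mex$ over exactly the positions reachable by reducing one coordinate of the shifted hyperrectangle.

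Next I would apply \cref{lem:shifted_mex} with this set $S$ (the big union above) and the shift amount $k\ge 1$; note the lemma requires $k\ge 1$, which matches the regime where the statement is nontrivial, and the $k=0$ case is just \cref{obs:mex-xor} verbatim. Adding $k$ to both sides and pushing it inside the $\mex$ produces the prefix $[0,k-1]$ together with the set $\{s+k:s\in S\}$. The only remaining task is to check that $\{s+k:s\in S\}$ agrees termwise with the displayed union in the statement, i.e. that $\left(\bigoplus_{j\neq i}(k_j-k)\right)\oplus k_i''+k$ matches $k+\left(k_i'\oplus\bigoplus_{j\neq i}(k_j-k)\right)$ under the relabeling $k_i'=k_i''$, which is immediate from commutativity of $\oplus$ and the range $0\le k_i'<k_i-k$.

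The main obstacle, such as it is, is purely bookkeeping: the statement's display has an unbalanced parenthesis and uses $k_i'$ as the summation variable ranging over $[0,k_i-k)$, so I would take care to align my indexing and parenthesization with the intended meaning rather than the literal typo, and confirm that no term in the union is missed or duplicated. A secondary point worth a sentence is the boundary case $k=0$: there the $[0,k-1]$ prefix is empty and each $k_i-k=k_i$, so the identity reduces directly to \cref{obs:mex-xor}, and \cref{lem:shifted_mex} is not needed. Since the paper explicitly says the proof is analogous to that of \cref{lem:nimformula} and omits it, I would keep the writeup to these two invocations plus the termwise comparison, presenting it in two or three lines rather than grinding through the algebra.
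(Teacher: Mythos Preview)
Your proposal is correct and matches the paper's intended approach exactly: the paper states that the result is an immediate corollary of \cref{lem:shifted_mex} and \cref{obs:mex-xor}, with proof analogous to \cref{lem:nimformula}, which is precisely the two-step invocation (apply \cref{obs:mex-xor} to the shifted coordinates, then \cref{lem:shifted_mex}) you describe. Your handling of the $k=0$ boundary case is a nice addition, since \cref{lem:shifted_mex} is stated only for $k\ge 1$.
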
 

\noindent The following result generalizes \cref{thm:rect}. 
\thmformula*
\begin{proof}
    By contradiction assume that the claim is false and let
$p=\rs{k_1,\dots,k_{d}}$ be the lexicographically smallest $d$-tuple violating the claim. The set of moves from $\lambda$ is equal to $S_0\cup S_1$, where 
\begin{align*}
  S_0&=\{(k_1,\ldots,k_{i-1},0,k_{i+1},\ldots,k_d) : 1 \leq i \leq d\} \text{ \ \ \ and}\\
S_1&=\{(k_1,\ldots,k_{i-1},k'_i,k_{i+1},\ldots,k_d) : 1 \leq i \leq d, \ 1\leq k_i'<k_i\}.
\end{align*}

\noindent
Using this notation, we can write
\begin{align*}
    \sgrnim(p)&=\mex(\sgrnim(\overline p) :\overline  p \in S_0 \cup S_1) \\
    &=\mex(\{\sgrnim(\overline p) :\overline{p} \in S_1\}\cup \{0\}) \\
    &=\mex  \left( \{0\} \cup \left(\bigcup_{i=1}^d \bigg\{ 1+\bigg( k_i' \oplus \bigoplus_{j\neq i} (k_j-1)  \bigg):0 \leq k_i' <k_i-1 \bigg\}\right)\right) \\
    &=1+\bigoplus_{i=1}^d (k_i-1),
\end{align*}
where the second equality follows due to positions in $S_0$ being terminal, the third equality follows due to the minimality, and the last equality follows due to \cref{cor:rnim} with $k=1$.
\end{proof}

\cref{thm:formula} gives the Sprague-Grundy value of any position of \rnim{} under normal play via nimber addition.

\begin{cor}\label{cor:rnimformula}
    For $i\in [\ell]$, let $p_i=\rs{k^i_1, \ldots, k^i_{d_i}}$ be a position in $1$-\rnim{}. 
   Then $\sgrnim(p_1 + \cdots + p_\ell)=g_1\oplus \dots \oplus g_\ell$, where 
    \[
    g_i=\begin{cases}
       0 & \text{if }\Pi_{j=1}^{d_i}k^i_j=0;\\
       1+\bigoplus_{j=1}^{d_i} (k^i_j-1) & \text{otherwise.}
    \end{cases}
    \]
\end{cor}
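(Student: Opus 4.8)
The plan is to reduce this multi-component statement to the single-component case already settled by \cref{thm:formula}, using the Sprague-Grundy theorem. Since \rnim{} is by definition a disjunctive sum of $1$-\rnim{} games, and since the disjunctive sum is commutative and associative, the Sprague-Grundy theorem recorded in \cref{sec:prelim} gives at once
\[
\sgrnim(p_1 + \cdots + p_\ell) = \sgrnim(p_1) \oplus \cdots \oplus \sgrnim(p_\ell).
\]
Hence it suffices to establish $\sgrnim(p_i) = g_i$ for each $i \in [\ell]$, after which the claim follows by reassembling the nimber sum.

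To identify $\sgrnim(p_i)$ with $g_i$, I would split into two cases according to the value of $\prod_{j=1}^{d_i} k^i_j$. If this product is zero, then some side length $k^i_j$ equals zero, so by the definition of $1$-\rnim{} the position $p_i$ is terminal; by the terminal-position convention this gives $\sgrnim(p_i) = 0 = g_i$. If the product is nonzero, then every $k^i_j$ is a positive integer, so the positivity hypothesis of \cref{thm:formula} is met and that theorem applies directly to yield
\[
\sgrnim(p_i) = 1 + \bigoplus_{j=1}^{d_i}(k^i_j - 1) = g_i.
\]
In either case $\sgrnim(p_i) = g_i$.

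Combining the two displays gives $\sgrnim(p_1 + \cdots + p_\ell) = g_1 \oplus \cdots \oplus g_\ell$, as desired. I expect no real obstacle here: the argument is essentially an assembly of earlier results, and the only points meriting care are the appeal to the terminal-position convention in the degenerate case and the observation that the product-zero condition is exactly the negation of the hypothesis $k^i_1,\ldots,k^i_{d_i} > 0$ required by \cref{thm:formula}.
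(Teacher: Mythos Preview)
Your argument is correct and matches the paper's approach exactly: the paper does not even write out a proof for this corollary, merely noting that \cref{thm:formula} gives the Sprague-Grundy value of any \rnim{} position via nimber addition, which is precisely the Sprague-Grundy decomposition plus case split you carry out.
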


\section{Conway-Gurvich-Ho classification and misère} \label{sec:cgh}

The interaction of the normal and mis\`ere forms of combinatorial games was initiated by Conway \cite{Con76} and expanded by Gurvich and Ho \cite{GURVICH201854}. The so-called \emph{CGH classification} is defined by certain properties which we now recall.
The domestic class is not relevant for this paper so we omit it.
\begin{definition*}[CGH classification] \label{D.DTP}
An impartial game is called \begin{enumerate} 
\item {\em returnable} if for any move from a $(0,1)$-position (resp.,~a $(1,0)$-position) to a non-terminal position $y$, there is a move from $y$ to a $(0,1)$-position (resp.,~to a $(1,0)$-position);
\item {\em forced} if each move from a $(0,1)$-position results in a $(1,0)$-position and vice versa;
\item  {\em miserable} if for every position $x$, one of the following holds: (i) $x$ is a $(0, 1)$-position or $(1, 0)$-position, or (ii) there is no move from $x$ to a $(0,1)$-position or $(1, 0)$-position, or (iii) there are moves from $x$ to both a $(0, 1)$-position and a $(1, 0)$-position;
\item {\em pet} if it has only $(0,1)$-positions, $(1,0)$-positions, and $(k,k)$-positions with $k \geq 2$;
\item {\em tame} if it has only $(0,1)$-positions, $(1,0)$-positions, and $(k,k)$-positions with $k \geq 0$.
\end{enumerate}
\end{definition*}
According to the above definitions, tame, pet, miserable, returnable and forced games form nested classes:
a pet game is returnable and miserable, a miserable game is tame, 
and a forced game is returnable; see \cref{fig:CGclass}. 

\newpage
\begin{prop}\label{prop:pet}
    The games $1$-\pnim{} and $1$-\rnim{} are pet and returnable, but not forced.
\end{prop}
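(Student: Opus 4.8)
The plan is to prove that both games are \emph{pet}, deduce \emph{returnable} for free from the nested-class structure (a pet game is returnable), and then refute \emph{forced} by a single explicit position in each game. The engine of the pet proof is that pet is equivalent to a pointwise identity between the normal and misère Grundy values. Let $g\colon\mathbb Z_{\ge 0}\to\mathbb Z_{\ge 0}$ be the involution that swaps $0$ and $1$ and fixes every integer $\ge 2$. Writing $\SG$ and $\SG^-$ for the normal and misère Grundy values of the game at hand, a game is pet precisely when every position $p$ satisfies $\SG^-(p)=g(\SG(p))$: indeed $\SG(p)=0$ then forces $\SG^-(p)=1$, $\SG(p)=1$ forces $\SG^-(p)=0$, and $\SG(p)=k\ge 2$ forces $\SG^-(p)=k$, which is exactly the admissible list of $(t,\ell)$-pairs in the definition of pet. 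So the whole task reduces to proving this identity for $1$-\pnim{} and $1$-\rnim{}.

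I would prove $\SG^-(p)=g(\SG(p))$ by induction on the length of the longest play from $p$ (equivalently, on the number of cells of $\lambda$, or on $\sum_i k_i$). The base case is the unique terminal, with normal value $0$ and misère value $1=g(0)$. For the inductive step, set $T=\{\SG(p'):p\to p'\}$. By the induction hypothesis the set of misère values of the options is $\{g(t):t\in T\}$, since $g$ is a bijection; hence $\SG^-(p)=\mex\{g(t):t\in T\}$. The crucial structural input is that every non-terminal position admits a move to a terminal --- in $1$-\pnim{} remove \emph{all} rows to reach $\br{\,}$, and in $1$-\rnim{} set some coordinate to $0$ --- so $0\in T$. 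The proof then closes with a mex-swap lemma that I would isolate: if $0\in T$, then $\mex\{g(t):t\in T\}=g(\mex T)$.

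I expect this mex-swap lemma to be the main (though short) obstacle, and its verification is a clean case split on $m:=\mex T$, which is $\ge 1$ because $0\in T$. If $m=1$, then $1\notin T$, so $0\notin\{g(t):t\in T\}$ while $1$ is present, giving $\mex=0=g(1)$. If $m\ge 2$, then $\{0,\dots,m-1\}\subseteq T$ and $m\notin T$; since $g$ merely permutes $0$ and $1$ inside $\{0,\dots,m-1\}$ and fixes $m$, we get $\{0,\dots,m-1\}\subseteq\{g(t):t\in T\}$ and $m\notin\{g(t):t\in T\}$, so $\mex=m=g(m)$. This completes the pet claim, and returnability is immediate.

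Finally, to see that neither game is forced, it suffices to produce one $(1,0)$-position with a move to a position that is not a $(0,1)$-position (the $(0,1)$-positions are exactly the terminals, by the identity above). For $1$-\rnim{}, \cref{thm:formula} gives $\sgrnim(\rs{2,2})=1+(1\oplus 1)=1$, so $\rs{2,2}$ is a $(1,0)$-position, yet $\rs{2,2}\to\rs{1,2}$ with $\sgrnim(\rs{1,2})=1+(0\oplus 1)=2$, a $(2,2)$-position. For $1$-\pnim{}, \cref{thm:rect} (consistent with \cref{thm:partition_interval}) gives $\sgpnim(\br{2,2})=1$, while the column move $\br{2,2}\to\br{1,1}$ yields $\sgpnim(\br{1,1})=((2-1)\oplus 0)+1=2$, again a $(2,2)$-position. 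Hence in either game one may move from a $(1,0)$-position to a non-$(0,1)$-position, so neither game is forced.
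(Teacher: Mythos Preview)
Your proof is correct but takes a different route from the paper's. The paper invokes the Gurvich--Ho characterization (a game is pet if and only if it admits no $(0,0)$-position, \cite[Theorem~5]{GURVICH201854}) and then simply observes the same structural fact you isolate --- every nonterminal has a move to a terminal --- to conclude that the only $0$-positions are the terminals, which are $(0,1)$-positions by definition. You instead prove pet from scratch by establishing the pointwise identity $\SG^-=g\circ\SG$ via induction on longest play and your mex-swap lemma, with that same structural fact supplying the hypothesis $0\in T$. The paper's argument is a one-line appeal to the literature; yours is fully self-contained and in effect reproves the relevant slice of the Gurvich--Ho result for this class of games. One small slip: in $1$-\rnim{} there is not a \emph{unique} terminal (every tuple with a zero coordinate is terminal), but the base-case verification is the same for all of them, so the induction is unaffected. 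Your ``not forced'' witnesses differ cosmetically from the paper's ($\br{2,2}\to\br{1,1}$ and $\rs{2,2}\to\rs{1,2}$ versus the paper's $\br{2,2}\to\br{2}$ and $\rs{2,2}\to\rs{2,1}$), but both land in $(2,2)$-positions and either choice works.
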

\begin{proof}
Gurvich and Ho proved that a game is pet if and only if it does not admit a $(0,0)$-position; see
\cite[Theorem 5]{GURVICH201854}.
    We show that there are no $(0,0)$-positions in $1$-\pnim{} and $1$-\rnim{}. 
    Every nonterminal position has a move to a terminal position, so only terminal positions are $0$-positions. By definition the terminal positions are $(0,1)$-positions, implying that there are no $(0,0)$-positions. Furthermore, pet games are returnable; see \cite[Proposition 1]{GURVICH201854}. 
    
    The position $\br{2,2}=\rs{2,2}$ is a $(1,0)$-position in both games. It has a move to a $(2,2)$-position $\br{2}=\rs{2,1}$, so 1-\pnim{} and 1-\rnim{} are not forced.
\end{proof}

\begin{figure}[!h]
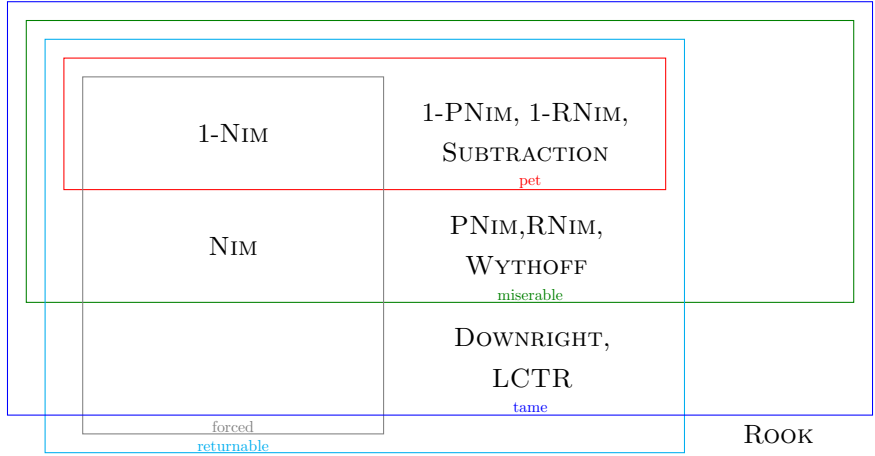

\centering
\class
\caption{\label{fig:CGclass}
The CGH classifications of $1$-\pnim{}, $1$-\rnim{}, \pnim{}, \rnim{}, and some known games; see \cite{Gottlieb2024LCTR,gottlieb2025impartialchessintegerpartitions,berlekamp2018winning,Wythoff}.
The domestic class is not relevant for this paper so we omit it. 
}
\end{figure}

\begin{cor}
    The games \pnim{} and \rnim{} are miserable, but not pet.
    They are also returnable, but not forced.
\end{cor}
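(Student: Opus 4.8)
The plan is to derive everything from \cref{prop:pet} together with the closure properties of the CGH classes under disjunctive sums established by Gurvich and Ho \cite{GURVICH201854}, since \pnim{} and \rnim{} are by definition disjunctive sums of the single-component games $1$-\pnim{} and $1$-\rnim{}. By \cref{prop:pet} every summand is pet, hence both miserable and returnable. I would then invoke the Gurvich--Ho results that the classes of miserable games and of returnable games are each preserved under disjunctive sums; this immediately yields that \pnim{} and \rnim{} are miserable and returnable. Locating (or, if necessary, re-deriving) the returnable-closure statement is the step I expect to require the most care, since it is subtler than the miserable-closure statement: analyzing the $(0,1)$- and $(1,0)$-positions of a sum passes through the genus / \nim{}-like structure of tame games.

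For the two negative assertions I would exhibit explicit witnesses. To see that the games are \emph{not} pet, recall from the proof of \cref{prop:pet} that $\br{2}=\rs{2,1}$ is a $(2,2)$-position in the respective single-component game. I would then consider the genuine two-summand position $\br{2}+\br{2}$ (equivalently $\rs{2,1}+\rs{2,1}$). Under normal play its Sprague--Grundy value is $\sgpnim(\br{2})\oplus\sgpnim(\br{2})=2\oplus 2=0$ by the Sprague--Grundy theorem, using $\sgpnim(\br{2})=2$ from \cref{thm:rect}. Under misère play I would check that its misère Grundy value $\sgpnim^-(\br{2}+\br{2})$ is also $0$, either by a direct $\mex$ computation over the small set of reachable positions (each component lies in $\{\br{2},\br{1},\br{\,}\}$, so there are only nine positions to evaluate) or, more conceptually, by noting that a $(2,2)$-position behaves like a \nim{}-heap of size $2$, so $\br{2}+\br{2}$ behaves like two such heaps, which is a $(0,0)$-position of misère \nim{}. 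Thus $\br{2}+\br{2}$ is a $(0,0)$-position, and since Gurvich and Ho proved that a game is pet if and only if it admits no $(0,0)$-position (the characterization already used in \cref{prop:pet}), the games \pnim{} and \rnim{} are not pet.

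Finally, to see that the games are \emph{not} forced, I would reuse the witness from \cref{prop:pet}: $\br{2,2}=\rs{2,2}$ is a $(1,0)$-position with a move to the $(2,2)$-position $\br{2}=\rs{2,1}$, which is not a $(0,1)$-position, so the defining condition of a forced game fails already at this position. Since this position and move occur in \pnim{} and \rnim{} (a single-component position is itself a position of the sum game, or one may adjoin empty components without changing the game), the conclusion follows. The only genuinely new content relative to \cref{prop:pet} is therefore the appearance of a $(0,0)$-position in the sum, which is precisely what separates the sum games from their pet summands; the main obstacle is thus supplying the correct Gurvich--Ho closure statements (especially for returnable) and confirming the misère value $\sgpnim^-(\br{2}+\br{2})=0$.
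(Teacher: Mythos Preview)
Your proof is correct and follows the same overall strategy as the paper: invoke the Gurvich--Ho closure results for miserable and returnable games under disjunctive sum (the paper cites \cite[Theorem 11, Proposition 2]{GURVICH201854} for exactly this), and reuse the witness $\br{2,2}=\rs{2,2}\to\br{2}=\rs{2,1}$ from \cref{prop:pet} for ``not forced.''

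The one genuine difference is in the ``not pet'' step. You exhibit the explicit $(0,0)$-position $\br{2}+\br{2}$ (correctly identified as such via \cref{thm:tame} or direct computation). The paper instead observes that single-row partitions $\br{n}$, respectively tuples $\rs{n,1,\dots,1}$, are \nim{}-piles, so \pnim{} and \rnim{} contain \nim{} as a subgame, and \nim{} is already known not to be pet. Both arguments are valid; yours is more self-contained and makes the obstructing $(0,0)$-position visible, while the paper's is shorter because it offloads the work to a well-known fact about \nim{}. Your hedging about the returnable-closure statement is unnecessary: the paper simply cites it, and you may do the same.
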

\begin{proof}
    The property of being both miserable and returnable is closed under disjunctive sum, thus \pnim{} and \rnim{} are miserable and returnable; see \cite[Theorem 11, Proposition 2]{GURVICH201854}. 
    \pnim{} and \rnim{} are not pet, because \nim{} is not pet and a single row $\br{n}$ of \pnim{} is equivalent to $\rs{n,1,\ldots,1}$ in \rnim{} which is equivalent to a single pile of \nim{} with $n$ tokens.
\end{proof}
The location of $1$-\pnim{}, $1$-\rnim{}, \pnim{}, and \rnim{} in the CGH classification scheme is shown in \cref{fig:CGclass}.


\subsection{Computing mis\`ere-Grundy values}

Given tame games of known Sprague-Grundy values, we can compute the mis\`ere-Grundy value of their disjunctive sum:
\begin{thm}[Conway {\cite[p. 178]{Con76}}, Gurvich et al. \cite{GURVICH201854}]\label{thm:tame}
    Let $p_1,\ldots, p_n$ be tame games. Then their disjunctive sum is a
    \begin{itemize}
    \item $(1,0)$-position if and only if it has and odd number of $1$-positions among $p_i$ and the rest are $0$-positions.
    \item $(0,1)$-position if and only if it has and even number of $1$-positions among $p_i$ and the rest are $0$-positions.
    \item $(k,k)$-position for some $k\geq0$ otherwise, where $k$ is the Nim-sum of the Sprague-Grundy values of the $p_i$'s.
    \end{itemize}
\end{thm}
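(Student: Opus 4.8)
The plan is to show that a disjunctive sum of tame games has exactly the same misère behaviour as the \nim{} position whose heap sizes are $\SG(p_1),\dots,\SG(p_n)$, and then to read off the three cases from the classical misère theory of \nim{}. The normal-play coordinate is free: by the Sprague-Grundy theorem $\SG(p_1+\cdots+p_n)=\bigoplus_i\SG(p_i)$, which equals the parity of the number of $1$-positions when every $\SG(p_i)\in\{0,1\}$, and equals $k$ otherwise. So the entire content lies in the misère value $\SG^-(p_1+\cdots+p_n)$.

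I would first record the misère values of \nim{} itself. A single heap of size $m$ is a $(0,1)$-, $(1,0)$-, or $(m,m)$-position according as $m=0$, $m=1$, or $m\ge 2$; and for a sum of heaps the misère Grundy value equals the ordinary Nim-sum whenever some heap has size at least $2$, and equals its complement under $0\leftrightarrow 1$ whenever all heaps have size at most $1$. Substituting $m_i=\SG(p_i)$, these are precisely the three displayed cases: some $\SG(p_i)\ge 2$ yields $(k,k)$ with $k=\bigoplus_i\SG(p_i)$, while if all $\SG(p_i)\le 1$ then an odd (resp.\ even) number of $1$'s makes the Nim-sum $1$ (resp.\ $0$) and its complement $0$ (resp.\ $1$), i.e.\ a $(1,0)$- (resp.\ $(0,1)$-) position.

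The heart of the argument is the claim, proved by induction on the total height of $G=p_1+\cdots+p_n$, that $\SG^-(G)=\SG^-(N)$, where $N$ is the \nim{} position with heaps $\SG(p_1),\dots,\SG(p_n)$. Every option of $G$ plays in a single summand $p_i\to q$; since $p_i$ lives in a tame game, $q$ is again tame, and the role of tameness is to force the pair $(\SG(q),\SG^-(q))$ to coincide with that of a \nim{}-heap of size $\SG(q)$. Because $\SG(p_i)=\mex\{\SG(q):p_i\to q\}$, the summand $p_i$ realizes exactly the heap sizes reachable from a \nim{}-heap of size $\SG(p_i)$. Applying the induction hypothesis to each (smaller) option of $G$ then puts the options of $G$ and of $N$ in misère-value-preserving correspondence, so the two defining $\mex$'s agree and $\SG^-(G)=\SG^-(N)$; the three cases follow from the misère values of $N$ recorded above.

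The step I expect to be the main obstacle is precisely the matching at Grundy values $0$ and $1$. The entire misère twist is carried by heaps of size at most $1$, so an option that drops some $\SG(p_i)$ from $\ge 2$ down to $0$ or $1$ must be certified to behave like the corresponding small \nim{}-heap, namely as a genuine $(0,1)$- or $(1,0)$-position, rather than as an anomalous $(0,0)$- or $(1,1)$-position. This exclusion is exactly what the tameness hypothesis supplies (and for the pet games of this paper such anomalies never occur), and it is the reason the hypothesis cannot be relaxed; I would therefore isolate the ``options match \nim{}'' correspondence as a standalone lemma. Finally, since the three cases are mutually exclusive and exhaustive and their outcomes $(1,0)$, $(0,1)$, and $(k,k)$ occupy disjoint sets of pairs, computing the value in each case delivers all three biconditionals simultaneously.
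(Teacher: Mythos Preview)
The paper does not supply a proof of this theorem; it is quoted from Conway and from Gurvich--Ho and used as a black box, so there is no argument in the paper to compare your sketch against.

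Your sketch nonetheless has a real gap. You assert that tameness ``excludes'' the anomalous $(0,0)$- and $(1,1)$-positions, but by the definition given in the paper a tame game may contain $(k,k)$-positions for every $k\ge 0$, including $k\in\{0,1\}$; it is the stronger \emph{pet} hypothesis that forbids them. A single tame $(0,0)$-position $p$ (for instance, one whose unique option is a $(2,2)$-position built over a $(0,1)$ and a $(1,0)$) already falsifies your inductive claim $\SG^-(G)=\SG^-(N)$: the matching Nim heap has size $0$ and mis\`ere value $1$, while $\SG^-(p)=0$. The genuine argument in Conway's genus theory (and in Gurvich--Ho) does not identify each tame summand with a Nim heap; it tracks the firm/fickle dichotomy---fickle meaning $(0,1)$ or $(1,0)$, firm meaning $(k,k)$---and shows that the sum is fickle iff all summands are, with the $0\leftrightarrow 1$ swap applied only in that case. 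A smaller omission: your ``mis\`ere-value-preserving correspondence'' ignores options $q$ of $p_i$ with $\SG(q)>\SG(p_i)$, which have no counterpart in $N$; these extra options are indeed harmless (the enlarged Nim position can move back to $N$, so its mis\`ere value differs from $\SG^-(N)$ and cannot spoil the $\mex$), but that reversibility step must be made explicit.
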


Using the above theorem alongside \cref{cor:rnimformula} we can efficiently compute normal and mis\`ere Grundy values for \rnim{}; see \cref{tab:calctame} for examples.

\begin{table}[!h]
    \centering
    $\begin{array}{c|c|c}
        \text{Position in \rnim{}} & \sgrnim^{\phantom{}} & \sgrnim^- \\ \hline 
        \rs{2,2} + \rs{4,3,2} + \rs{1,1} & 1 \oplus 1 \oplus 1=1 & 0 \\
        \rs{1,0,2} + \rs{2,3,4} + \rs{4,4} & 0 \oplus 1\oplus 1=0 & 1 \\
        \rs{1,2,3} + \rs{2,2} & 4 \oplus 1=5 & 5 \\
        \rs{1,2,3} + \rs{3,2} & 4 \oplus 4=0 & 0 
    \end{array}$
    \caption{
    Computing $\sgrnim$- and $\sgrnim^-$-values via \cref{thm:tame}.}
    \label{tab:calctame}
\end{table}

\subsection{How to play mis\`ere 1-\pnim{}}\label{sec:how-to-play}

Recall that \pnim{} is pet (see \cref{prop:pet}) and that we have characterized all $1$-positions (see \cref{thm:partition_interval}) and $0$-positions (only the empty partition) in 1-\pnim{}. Thus $1$-positions in 1-\pnim{} are precisely the losing positions in mis\`ere 1-\pnim{}.
In particular, a player with a winning strategy simply needs to make a move leading to partition in 
\[\{\lambda : \br{r,r,r-1,r-2,\ldots,2}\leq \lambda \leq \br{r^r}\} \mbox{ for some } r\geq 2\}.\] 
The proof of \cref{thm:partition_interval}  shows how to find such a move.

\section{Conclusion} \label{sec:conclusion}

It is interesting to us that the analyses of \pnim{} and \rnim{} are connected to tools from different areas of mathematics, like XOR addition (not arising from the disjunctive sum of games) and Lucas's theorem.
If $n\ge 5$, then the Sprague-Grundy value of the rectangular partition $\br{n,n}$ is equal to the number of (undirected) Hamiltonian cycles in the $(n-2)$-Möbius ladder \cite{oeis_A103889}.

More generally, the matrix $\mathbb M =\left( \sgpnim(\br{c^r}) \right)_{r, c = 1}^{\infty}$ is related to Sierpinski triangle.
The matrix $\mathbb M$ also has the interesting cyclical property that $k$ appears in entry $(i, j)$ if and only if $j$ appears in entry $(k, i)$. 
Finally, reading $\mathbb M$ by antidiagonals, it is lexicographically first among all matrices of positive integers for which no row or column contains a repeated entry \cite{oeis_A280172}.

The games \pnim{} and \rnim{} reveal several patterns yet to be understood.
Recall our characterizations of $1$-positions in $1$-\pnim{}.
\thmpartitioninterval*
\begin{question}\label{q:sg2}
    Which partitions have Sprague-Grundy value (and thus also mis\`ere-Grundy value) $2$ in \pnim{}?
\end{question}
In \cref{app:sg2} we list all partitions with of order at most $15$ 
with $\sgpnim$ value $2$.
Also related to \cref{thm:partition_interval}, observe that removing squares from a rectangular partition ``under the main anti-diagonal'' does not affect its Sprague-Grundy value.
To our surprise, computational data suggest that a similar statement can be made about heavy rectangles (see \cref{sec:heavy}).
\begin{conj}\label{conj:heavychoppedrect}
    Let $a,b\in\ZZ$ be such that $\br{(a+1)^{b+1}}$ is heavy. 
    Then any $\lambda$ satisfying
    $\br{a+1,a,\dots, a-b+1}\le\lambda\le \br{(a+1)^{b+1}}$ is heavy.
\end{conj}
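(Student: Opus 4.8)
The plan is to fix the value from above and reduce everything to a reachability statement. First I would record a lattice observation: if $\str_{a+1,b+1}=\br{a+1,a,\dots,a-b+1}\le\lambda\le\br{(a+1)^{b+1}}$, then the definition of Young's order forces $\lambda_1=a+1$ and exactly $b+1$ rows, so the longest play from every such $\lambda$ equals $(a+1)+(b+1)-1=a+b+1$ and, by \cref{prop:bound}, $\sgpnim(\lambda)\le a+b+1$. Moreover any single move deletes at least one whole row or column, so the resulting partition has largest part $\le a+1$ and at most $b+1$ rows with at least one of these two bounds dropping by one; by \cref{prop:bound} again its value is $\le a+b$. Hence no move attains $a+b+1$, and proving $\sgpnim(\lambda)=a+b+1$ is \emph{equivalent} to showing that from $\lambda$ one can reach a $t$-position for every $t\in[0,a+b]$. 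Using \cref{obs:conjinvariance} I would assume $a\ge b$ (the lower endpoint is a genuine partition only in this case, and the case $a<b$ follows by conjugation).

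Next I would set up induction on $a+b$, translating the heaviness hypothesis through \cref{obs:lucas} into its arithmetic form $a\oplus b=a+b$, i.e.\ $a$ and $b$ have disjoint binary supports (equivalently $\binom{a+b}{b}$ is odd). Two anchors are already heavy: the rectangle by hypothesis, and the staircase $\str_{a+1,b+1}$ unconditionally by \cref{prop:padded_stair}. Mirroring the three-range strategy of \cref{prop:padded_stair}, I would cover the targets in blocks. Small values $t\in[0,b]$ are obtained by deleting all but one suitable column (or everything), the resulting single column being heavy by \cref{prop:hook}. Large values are obtained by keeping the leftmost $k$ columns of $\lambda$: a short computation shows the result lands in the smaller interval $[\str_{k,b+1},\br{k^{b+1}}]$, so whenever $(k-1)\oplus b=(k-1)+b$ the inductive hypothesis gives it value exactly $k+b$; the symmetric row truncation handles another block.

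The difficulty, and where I expect the real work to be, is that the disjoint-support condition passes to the truncation interval $[\str_{k,b+1},\br{k^{b+1}}]$ only for the \emph{sparse} set of sizes $k$ with $(k-1)$ disjoint from $b$; for the remaining $k$ the enclosing rectangle $\br{k^{b+1}}$ is not heavy, so the inductive hypothesis says nothing about the one-move descendants that would supply the intervening target values. In other words, the generic intermediate partitions reachable from $\lambda$ fall outside the heavy-interval family, and their Sprague-Grundy values are not controlled by the induction. To close this gap I would attempt a direct mex computation in the spirit of \cref{lem:nimformula}: isolate the contribution of the cells lying below the anti-diagonal and argue, using the Lucas/Sierpi\'nski structure behind $a\oplus b=a+b$ (\cref{cl:lucas}), that deleting them cannot puncture the reachable set, so that it still contains the full initial segment $[0,a+b]$. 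I suspect that making this precise will require strengthening the inductive statement — for instance to a formula for $\sgpnim$ on \emph{all} partitions with $\lambda_1=a+1$ and $b+1$ rows inside the interval, not merely the assertion of heaviness — together with a structural lemma showing that sub-anti-diagonal cells do not affect the mex, which would simultaneously recover the square-rectangle observation preceding the conjecture and its link to \cref{thm:partition_interval}.
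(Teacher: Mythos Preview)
The statement you are trying to prove is \emph{Conjecture}~\ref{conj:heavychoppedrect}: the paper offers it only as a conjecture supported by the data in \cref{sec:heavy}, and gives no proof. There is therefore nothing in the paper to compare your argument against.

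Your own write-up is not a proof either, and you say so. The preliminary observations are fine: any $\lambda$ in the interval has exactly $b+1$ rows and first part $a+1$, so \cref{prop:bound} caps $\sgpnim(\lambda)$ at $a+b+1$, and any single move strictly lowers that cap, reducing the task to exhibiting a $t$-position among the options for every $t\in[0,a+b]$. The small and large ranges are handled exactly as in \cref{prop:padded_stair} and \cref{prop:hook}. The genuine gap is the one you isolate: after deleting the rightmost columns to width $k$, the resulting $\mu$ lies in $[\str_{k,b+1},\br{k^{b+1}}]$, but the induction hypothesis applies only when $(k-1)\oplus b=(k-1)+b$, which picks out a sparse set of $k$. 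For the remaining widths the enclosing rectangle is \emph{not} heavy, so nothing you have established tells you $\sgpnim(\mu)=k+b$, and no other move is singled out to supply those missing target values. Your proposed fix---a strengthened inductive hypothesis giving an exact formula for all $\lambda$ in the interval, together with a lemma that cells below the anti-diagonal are invisible to the mex---is precisely the missing content; until those two pieces are actually proved, the argument does not close. As things stand the conjecture remains open.
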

\noindent Here is another family of partitions that we conjecture to be heavy.
\begin{conj}\label{conj:shallowstair}
    If $i$, $s$, and $k$ are positive integers, then $\br{i + (k-1)s, \ldots, i+s, i}$ is heavy.
\end{conj}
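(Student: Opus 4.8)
The plan is to follow the template of \cref{prop:hook,prop:padded_stair}. Abbreviate $N=i+(k-1)s+k-1$ for the length of the longest play; since the target partition is a (shallow) staircase with $k$ parts and largest part $i+(k-1)s$, \cref{prop:bound} already gives $\sgpnim(\br{i+(k-1)s,\ldots,i})\le N$. I would prove the reverse inequality by exhibiting, for every target value $j\in[0,N-1]$, a single move to a position of Sprague--Grundy value exactly $j$, so that the mex is forced up to $N$. This is best run as a strong induction on the number of boxes, so that every \emph{strictly smaller} shallow staircase appearing as an option may be assumed heavy; the remaining option-values I would read off from \cref{thm:rect} (rectangles), \cref{prop:2rows} (two-row partitions), \cref{prop:hook} (hooks), and \cref{prop:padded_stair} (ordinary staircases), invoking \cref{obs:conjinvariance} freely to pass between row moves and column moves.

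Several explicit families already cover large initial and final segments of $[0,N-1]$. Keeping a single column of length $\ell$ (for $\ell\in[1,k]$) gives value $\ell$, and the empty move gives $0$, so $[0,k]$ is covered. Dually, a column move keeping one column of each length $k,k-1,\ldots,m$ reaches a partition whose value, by \cref{obs:conjinvariance}, equals that of the ordinary staircase $\str_{k,k-m+1}$, heavy by \cref{prop:padded_stair} with value $2k-m$; letting $m$ range covers $[k,2k-1]$. At the top end, keeping the uppermost $t$ rows produces the shallow staircase with parameters $(i+(k-t)s,\,s,\,t)$, heavy by induction with value $N-(k-t)$, covering $[N-k+1,N-1]$, while deleting the $m$ leftmost (height-$k$) columns yields the shallow staircase $(i-m,s,k)$ of value $N-m$, covering $[N-i+1,N-1]$.

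The middle range is where the real difficulty lies. Keeping a block of consecutive rows with parts $i+as,\ldots,i+bs$ yields, by induction, a shallow staircase of value $i+bs+(b-a)$, so row moves supply the union $\bigcup_{b=0}^{k-1}[\,i+bs,\;i+bs+b\,]$. For $s=1$ these intervals overlap and, together with the segments above, tile all of $[0,N-1]$ --- this recovers \cref{prop:padded_stair}. For $s\ge 2$, however, consecutive treads leave a gap $[\,i+bs+b+1,\ i+(b+1)s-1\,]$ between levels $b$ and $b+1$, and these inter-tread gaps are not reachable by any row move to a staircase. The crux is to fill them using column moves to non-staircase options --- rectangular and near-rectangular sub-blocks $\br{c^r}$ whose values obey the Nim-sum formula of \cref{thm:rect}, i.e.\ the Lucas-type parity condition of \cref{obs:lucas,cl:lucas}.

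The main obstacle I anticipate is precisely this reconciliation of additive and XOR data: the gap-values grow additively in $i,s,k$, whereas the admissible rectangular options carry values of the shape $((r-1)\oplus(c-1))+1$. One must therefore show that for each gap-value $j$ there is an admissible sub-rectangle (or a ``fat staircase'' $\br{k^{x},k-1,\ldots,m}$, which I would first prove heavy in its own right) whose XOR value equals $j$. Establishing that such options always exist, uniformly in $i$, $s$, and $k$, is the heart of the matter and is exactly what keeps \cref{conj:shallowstair} open; I would attempt it by isolating a single lemma asserting that the XOR values of the admissible sub-blocks, shifted by the induction, tile each inter-tread interval, and then reduce the entire conjecture to that lemma.
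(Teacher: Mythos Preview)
The statement you are addressing is \emph{Conjecture}~\ref{conj:shallowstair}: the paper explicitly lists it among the open problems in \cref{sec:conclusion} and offers no proof. There is therefore no ``paper's own proof'' against which to compare your proposal.

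Your write-up is not a proof either, and you say so yourself: the covering argument you set up handles $[0,k]$, $[k,2k-1]$, the treads $\bigcup_b[i+bs,\,i+bs+b]$, and the top segment $[N-\max(i,k)+1,\,N-1]$, but for $s\ge 2$ the inter-tread intervals $[\,i+bs+b+1,\ i+(b+1)s-1\,]$ remain uncovered, and the only candidates you have left for those values are options whose Sprague--Grundy values are governed by the XOR formula of \cref{thm:rect}. Reconciling that XOR arithmetic with the additive target values is precisely the obstruction, and your final paragraph names it accurately. In other words, your proposal is a correct and well-organised reduction of \cref{conj:shallowstair} to a tiling lemma about XOR values of sub-blocks, but that lemma is the whole difficulty and remains unproved; the paper's authors evidently reached the same impasse, which is why the statement is recorded as a conjecture rather than a theorem.
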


\noindent
Heavy partitions seem to appear in regular patterns. 
\begin{question}\label{conj:heavy}
    Which partitions are heavy?
\end{question}
\noindent In \cref{sec:heavy} we list all heavy partitions of $n\le 8$, up to conjugation.

\subsection*{Acknowledgements}
This work is supported in part by internal grants from Rhodes College, by the  Slovenian Research and Innovation Agency (research program P1-0383 and research projects 
J1-3003,
J1-4008,
J5-4596, 
BI-US/22-24-164, 
BI-US/22-24-093, 
BI-US/24-26-018, 
and
N1-0370).

\appendix
\renewcommand{\thesection}{\Alph{section}}
\newpage
\section{Small partitions with $\sgpnim$ value $2$\label{app:sg2}} \label{sec:small}
All partitions $\lambda$ with $\sgpnim(\lambda)=2$ of  order at most $n = 26$, up to conjugation:\\[6mm]
    {\centering
    \begin{tabular}{>{\centering\arraybackslash}m{18mm}>{\centering\arraybackslash}m{18mm}>{\centering\arraybackslash}m{18mm}>{\centering\arraybackslash}m{18mm}>{\centering\arraybackslash}m{18mm}}   
\youngdiagram{1, 1}& 
\youngdiagram{3, 3, 3, 1}& 
\youngdiagram{3, 3, 3, 3}& 
\youngdiagram{5, 4, 4, 3, 1}& 
\youngdiagram{5, 5, 5, 3, 3, 1}\\[11mm]
\youngdiagram{5, 5, 5, 4, 3, 1}& 
\youngdiagram{5, 5, 5, 5, 3, 1}& 
\youngdiagram{5, 5, 5, 4, 4, 1}& 
\youngdiagram{5, 5, 5, 3, 3, 3}& 
\youngdiagram{5, 5, 5, 5, 4, 1}\\[11mm]
&
\youngdiagram{5, 5, 5, 5, 5, 1}& 
\youngdiagram{5, 5, 5, 5, 3, 3}& 
\youngdiagram{5, 5, 5, 4, 4, 3}& 
    \end{tabular}}
  %
\newpage
\section{Small heavy partitions under \pnim{}} \label{sec:heavy}
All heavy partitions under \pnim{} of order at most $8$, up to conjugation:\\[6mm]
   {\centering
    \begin{tabular}{>{\centering\arraybackslash}m{13mm}>{\centering\arraybackslash}m{13mm}>{\centering\arraybackslash}m{13mm}>{\centering\arraybackslash}m{13mm}>{\centering\arraybackslash}m{13mm}>{\centering\arraybackslash}m{13mm}}
\youngdiagram{1}& \youngdiagram{1, 1}& \youngdiagram{2, 1}& \youngdiagram{1, 1, 1}& \youngdiagram{2, 1, 1}& 
\youngdiagram{1, 1, 1, 1}\\[6mm]
\youngdiagram{3, 1, 1}& \youngdiagram{2, 2, 1}& \youngdiagram{2, 1, 1, 1}& \youngdiagram{1, 1, 1, 1, 1}&
\youngdiagram{3, 2, 1}& \youngdiagram{3, 1, 1, 1}\\[8mm] 
\youngdiagram{2, 2, 2}& \youngdiagram{2, 2, 1, 1}& \youngdiagram{2, 1, 1, 1, 1}&
\youngdiagram{1, 1, 1, 1, 1, 1}& \youngdiagram{4, 1, 1, 1}& \youngdiagram{3, 2, 2}\\[9mm]
\youngdiagram{3, 2, 1, 1}& \youngdiagram{3, 1, 1, 1, 1}&
\youngdiagram{2, 2, 2, 1}& \youngdiagram{2, 2, 1, 1, 1}& \youngdiagram{2, 1, 1, 1, 1, 1}& \youngdiagram{1, 1, 1, 1, 1, 1, 1}\\[12mm]
\youngdiagram{4, 2, 1, 1}&
\youngdiagram{4, 1, 1, 1, 1}& \youngdiagram{3, 3, 1, 1}& \youngdiagram{3, 2, 2, 1}& \youngdiagram{3, 2, 1, 1, 1}& \youngdiagram{3, 1, 1, 1, 1, 1}\\[11mm]
&
\youngdiagram{2, 2, 2, 1, 1}& \youngdiagram{2, 2, 1, 1, 1, 1}& \youngdiagram{2, 1, 1, 1, 1, 1, 1}& \youngdiagram{1, 1, 1, 1, 1, 1, 1, 1}&
    \end{tabular}}

\end{document}